\theoremstyle{definition}
\newtheorem{thm}{Theorem}[section]
\newtheorem{prop}[thm]{Proposition}
\newtheorem{lemma}[thm]{Lemma}
\newtheorem{defn}[thm]{Definition}
\newtheorem{ex}[thm]{Example}
\def\S{\mathfrak{S}}
\def\k{\mathbf{k}}
\def\Par{\operatorname{Par}}
\def\SSYT{\operatorname{SSYT}}
\def\SYT{\operatorname{SYT}}
\def\BPD{\operatorname{BPD}}
\def\id{\operatorname{id}}
\def\ins{S}
\def\colread{\operatorname{colread}}
\def\shape{\operatorname{sh}}
\def\perm{\operatorname{perm}}
\def\len{\operatorname{length}}
\def\maxcode{\operatorname{maxcode}}
\def\code{\operatorname{code}}
\def\words{\operatorname{words}}
\def\+{\includegraphics[scale=0.4]{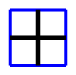}}
\def\bt{\includegraphics[scale=0.4]{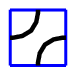}}
\def\rt{\includegraphics[scale=0.4]{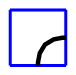}}
\def\jt{\includegraphics[scale=0.4]{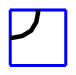}}
\def\mindroop{\textsf{min-droop}}
\title{Knuth moves for Schubert polynomials}
\author{Daoji Huang}
\thanks{DH was supported by NSF-DMS2202900.}
\author{Pavlo Pylyavskyy}
\date{}
\begin{document}

\maketitle

\begin{abstract}
In our previous work we have introduced an analogue of Robinson-Schensted-Knuth correspondence for Schubert calculus of the complete flag varieties. The objects inserted are certain biwords, the outcomes of insertion are bumpless pipe dreams, and the recording objects are decorated chains in Bruhat order. In this paper we study a class of biwords that have a certain associativity property; we call them plactic biwords. We introduce analogues of Knuth moves on plactic biwords, and prove that any two plactic biwords with the same insertion bumpless pipe dream are connected by those moves. 
\end{abstract}

\section{Introduction}
The classical Robinson-Schensted-Knuth (RSK) correspondence \cite{robinson1938representations, schensted1961longest, knuth1970permutations}, in its most general form, gives a bijection between matrices $A$ with nonnegative integers and paris of semistandard Young tableaux $(P,Q)$ of the same shape $\lambda$, where the size of $\lambda$ is equal to the sum of entries in $A$. This correspondence has numerous consequences and applications; for example it gives a bijective proof of the celebrated Cauchy's identity of symmetric functions. There are also numerous specializations and variants of the RSK correspondence that are useful in different contexts. In particular, one version gives a bijection between the set of finite-length words on the alphabet $\{1,2,3,\cdots\}$ and pairs of Young tableaux $(P,Q)$ where $P$ is semistandard and $Q$ is standard with the same shape. Here $P$ is known as the \emph{insertion tableaux} and $Q$ as the \emph{recording tableaux}.  This gives a combinatorial interpretation of the  identity \[(x_1+x_2+\cdots)^m=\sum_{\lambda \vdash m}|\SYT(\lambda)|s_\lambda(x_1,x_2,\cdots).\] A central property of this RSK correspondence is the following: two words $\mathbf{a}_1$ and $\mathbf{a}_2$ have the same insertion tableaux if and only if they are related by the \emph{elementary Knuth relations}:
\[bac\sim bca \text{ if }a<b\le c\]
\[acb\sim cab \text{ if }a\le b< c.\]
This led to the definition of \emph{plactic monoid} by Lascoux and Schützenberger \cite{MR646486}, see also \cite[Chapter 5]{MR1905123}, which is a monoid whose generators are elements in a totally ordered alphabet and relations are the elementary Knuth relations. 

In our previous work \cite{HP}, we generalized Schensted's left and right insertion algorithms on semistandard Young tableaux, which give a combinatorial model of Schur polynomials (or functions), to bumpless pipe dreams of Lam, Lee, and Shimozono \cite{LLS}, which give a combinatorial model for Schubert polynomials \cite{MR660739}. Based on these algorithms we introduced two versions of the RSK correspondence, where we generalize the input from words to certain biwords, the insertion objects from SSYTs to BPDs, and recording objects from SYTs to certain decorated chains in Bruhat order.  This generalized RSK correspondence interprets the polynomial identity 
\[\S_{k_1}\S_{k_2}\cdots\S_{k_m}=\sum_{\pi\in S_\infty :\ell(\pi)=m}|chs_{(k_1,\cdots,k_m)}(\pi)|\S_\pi.\]
where $k_1,\cdots,k_m$ are positive integers, $\S_k=x_1+\cdots+x_k$ for any $k$, 
$\S_\pi$ is the Schubert polynomial for $\pi$, and 
\[chs_{(k_1,\cdots,k_m)}(\pi)=\{(\id\lessdot_{k_1} \pi_1\lessdot_{k_2}\cdots\lessdot_{k_{\ell(\pi)}}\pi_{\ell(\pi)}=\pi)\},\]
namely the set of complete  chains in mixed $k$-Bruhat order where the decoration is $(k_1,\cdots,k_m).$ Notice that it's possible that $chs(\pi)$ is empty. 
%Recently, Zachary Hamaker noticed that it is possible to use either left or right generalized RSK insertion to combinatorially interpret a Cauchy identity between Schubert polynomials and the Postnikov-Stanley $\mathfrak{D}$-polynomials \cite{postnikov2009chains}. 

In classical RSK, Schensted's row (right) and column (left) insertions always commute. Therefore, given a word, we may read it from left to right and iteratively perform right insertion, read it from right to left and iteratively perform left insertion, or more generally start with a letter in the middle and iteratively construct a consecutive subword by adding one more letter, and whenever we read a letter to the left we perform left insertion and whenever we read a letter to the right we perform right insertion. The insertion object (SSYT) is always well-defined regardless of these various choices of insertion order. However, this is not true in general for our generalized RSK insertion. This poses an obstacle 
to generalize the classical theory of plactic monoid to the Schubert setting. In this paper, we identify a simple condition on biwords (Definition~\ref{def:assoc-biwords}) under which the insertion BPD is well-defined regardless of order of insertion (Proposition~\ref{prop:associativity}) and generalize classical Knuth relations to this restricted set of biwords, which we will call \emph{plactic biwords} (Definition~\ref{def:knuth}). We furthermore study the basic properties of these generalized Knuth relations and plactic biwords. 
%In forthcoming work with Tianyi Yu, we also investigate the crystal-theoretic structures on these plactic biwords.
\begin{defn}
\label{def:assoc-biwords}
A biletter is a pair of positive integers $\binom{a}{k}$ where $a\le k$.
A \textbf{plactic biword} is a word of biletters $\binom{\mathbf{a}}{\mathbf{k}}=\binom{a_1,\cdots, a_\ell}{k_1,\cdots, k_\ell}$, where $k_i\ge k_{i+1}$ for each $i$. If furthermore $k_1=\cdots =k_\ell =k$ for a fixed $k$, we call the plactic biword a \textbf{$k$-biword}. We may write a $k$-biword $\binom{\mathbf{a}}{\mathbf{k}}$ as $\binom{\mathbf{a}}{k}$.
\end{defn}

The following statement shows that the insertion object of a plactic biword is well-defined regardless of choices of order of insertion. Recall that a given biword $Q=\binom{a_1,\cdots,a_\ell}{k_1,\cdots,k_\ell}$, \cite{HP} defines a map $\mathcal{L}(Q)=(\varphi_L(Q), ch_L(Q))$ where $\varphi_L(Q)$ is the BPD obtained by reading $Q$ from right to left and successively performing left insertion, and %$ch_L(Q)=(\id\lessdot_{k_\ell} \pi_1\lessdot_{k_\ell-1}\cdots \lessdot_{k_1}\pi_\ell=\perm(\varphi_L(D)))$,
$ch_L(Q)$ is the recording chain in mixed $k$-Bruhat order with edge labels $k_\ell,\cdots, k_1$,
as well as a map $\mathcal{R}(Q)=(\varphi_R(Q), ch_R(Q))$ where $\varphi_R(Q)$ is the BPD obtained by reading $Q$ from left to right and successively performing right insertion, and $ch_R(Q)$ is the recording chain in mixed $k$-Bruhat order with edge labels $k_1,\cdots, k_\ell$.
For details of these insertion algorithms see \cite[Section 3]{HP}.  
\begin{prop}
\label{prop:associativity}
Suppose $Q=\binom{\mathbf{a}}{\mathbf{k}}$ is a plactic biword of length $\ell$.
Let $Q_1, Q_2,\cdots,Q_\ell$ be a sequence of biwords such that  $\len(Q_i)=i$, $Q_i$ is a consecutive subword of $Q$, and for each $1\le i < \ell$ $Q_{i+1}=\binom{a}{k} Q_i$, or $Q_{i+1}=Q_i\binom{a}{k}$ for some $a$ and $k$.   Let $D_1:=\varphi_L(Q_1)=\varphi_R(Q_1)$, and for each $1\le i < \ell$, define
\[D_{i+1}=\begin{cases}
\binom{a}{k}\rightarrow D_i & \text{ if } Q_{i+1}=\binom{a}{k}Q_i\\
D_i\leftarrow \binom{a}{k} & \text{ if } Q_{i+1}=Q_i\binom{a}{k}.
\end{cases}\]
Then $D_\ell$ is independent of the choice of the sequence of biwords. We write $\varphi(Q)$ for $D_\ell$.
\end{prop}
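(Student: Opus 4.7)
The plan is to induct on $\ell = \len(Q)$ and reduce the proposition to a commutativity lemma for left- and right-insertion of a single biletter each.

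Assume the proposition for all plactic biwords of length $< \ell$, and let $\sigma, \sigma'$ be two growth sequences for $Q$. Compare their last steps. If both sequences end by performing the same operation---either both prepend $\binom{a_1}{k_1}$ or both append $\binom{a_\ell}{k_\ell}$---then $Q_{\ell-1}^\sigma = Q_{\ell-1}^{\sigma'}$, so the inductive hypothesis applied to this shorter plactic biword yields $D_{\ell-1}^\sigma = D_{\ell-1}^{\sigma'}$, and the final step produces the same $D_\ell$. If the last steps disagree, say $\sigma_\ell$ prepends and $\sigma'_\ell$ appends, set $Q' := \binom{a_2, \ldots, a_{\ell-1}}{k_2, \ldots, k_{\ell-1}}$, a plactic biword of length $\ell - 2$ whose insertion BPD $D' := \varphi(Q')$ is well-defined by the inductive hypothesis. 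The inductive hypothesis applied to $Q_{\ell-1}^\sigma$ and $Q_{\ell-1}^{\sigma'}$ then gives $D_{\ell-1}^\sigma = D' \leftarrow \binom{a_\ell}{k_\ell}$ and $D_{\ell-1}^{\sigma'} = \binom{a_1}{k_1} \to D'$. Hence $D_\ell^\sigma = D_\ell^{\sigma'}$ reduces to the commutativity
\[
\binom{a_1}{k_1} \to \Bigl(D' \leftarrow \binom{a_\ell}{k_\ell}\Bigr) = \Bigl(\binom{a_1}{k_1} \to D'\Bigr) \leftarrow \binom{a_\ell}{k_\ell},
\]
where the plactic condition on $Q$ forces $k_1 \geq k_i' \geq k_\ell$ for every $k_i'$ appearing in $Q'$.

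I would establish this commutativity as a self-contained key lemma by a separate induction on $\len(Q')$. The base case $Q' = \emptyset$ is a direct verification on the identity BPD using the definitions of left and right insertion from \cite[Section 3]{HP}. For the inductive step, I would exploit the plactic inequalities $k_1 \geq k_i' \geq k_\ell$ to localize the effects of the two insertions in complementary regions of $D'$: the left insertion of $\binom{a_1}{k_1}$ (carrying the largest $k$-value) should modify only tiles bounded by $k_1$ from one side, while the right insertion of $\binom{a_\ell}{k_\ell}$ (carrying the smallest $k$-value) is correspondingly confined from the other side. Combined with the freedom---granted by the proposition applied to $Q'$---to construct $D'$ by any convenient order of insertions, this should suffice to show that the two insertions act on disjoint parts of the BPD and therefore commute.

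The principal obstacle is the commutativity itself. Since left and right insertion on BPDs do \emph{not} commute in general, the proof of the key lemma must use the plactic hypotheses essentially. The technical heart of the argument is likely a careful path-tracking analysis of the two insertion algorithms, showing that their active regions are separated by the inequality $k_1 \geq k_\ell$ (and by the intermediate $k$-values appearing in $Q'$). Should direct path analysis prove intractable, an alternative strategy would be to identify an order-independent invariant of $Q$---for instance, a canonical factorization tied to the $k$-blocks of the plactic biword---that computes $\varphi(Q)$, from which associativity would follow automatically.
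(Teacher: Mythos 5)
Your inductive reduction is exactly the right skeleton, and it is in fact the paper's entire proof: the one-step commutativity
\[
\binom{a_1}{k_1} \rightarrow \Bigl(D' \leftarrow \binom{a_\ell}{k_\ell}\Bigr) \;=\; \Bigl(\binom{a_1}{k_1} \rightarrow D'\Bigr) \leftarrow \binom{a_\ell}{k_\ell}
\]
under the plactic condition $k_1 \ge k_i' \ge k_\ell$ is precisely the content of Theorem~3.3 of \cite{HP}, and the paper's proof of Proposition~\ref{prop:associativity} consists of citing that theorem and invoking induction in the way you describe. So the decomposition into ``same last step: apply the inductive hypothesis'' versus ``different last steps: reduce to a single left insertion commuting past a single right insertion'' is correct and identical to the intended argument. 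Had you recognized the key lemma as the already-established \cite[Theorem 3.3]{HP}, your proof would be complete as written.

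The genuine gap is in your proposed self-contained proof of that key lemma. The strategy of showing that the two insertions ``act on disjoint parts of the BPD and therefore commute'' cannot work: already in the extreme case of a $k$-biword, where $k_1 = k_i' = k_\ell$ for all $i$, both insertions operate on the same Grassmannian region of the BPD, and the statement specializes to the classical commutativity of Schensted row and column insertion --- where the two bumping paths genuinely interleave and can pass through common cells. The inequality $k_1 \ge k_\ell$ does not spatially separate the active regions; it only controls where each insertion path terminates. The actual argument in \cite{HP} is a local case analysis of how the left-insertion path and the right-insertion path interact when they do meet, showing the outcome is independent of the order. If you want a proof that does not cite \cite{HP}, you would need to carry out that interaction analysis rather than a disjointness argument; as it stands, the key lemma is asserted but not established.
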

\begin{proof}
This follows from \cite[Theorem 3.3]{HP} by induction.
\end{proof}
\begin{defn}
\label{def:knuth}
We define the \textbf{generalized Knuth relations} on plactic biwords as follows:
\begin{enumerate}[(1)]
    \item $\left(\begin{smallmatrix}\cdots & b & a & c & \cdots \\ \cdots & k & k & k & \cdots\end{smallmatrix}\right)\sim \left(\begin{smallmatrix}\cdots & b & c & a & \cdots \\ \cdots & k & k & k & \cdots\end{smallmatrix}\right)$ if $a<b\le c$
    \item $\left(\begin{smallmatrix}\cdots & a & c & b & \cdots \\ \cdots & k & k & k & \cdots\end{smallmatrix}\right)\sim \left(\begin{smallmatrix}\cdots & c & a & b & \cdots \\ \cdots & k & k & k & \cdots\end{smallmatrix}\right)$ if $a\le b< c$
    \item $\left(\begin{smallmatrix}\cdots & a & b & \cdots \\ \cdots & k & k & \cdots\end{smallmatrix}\right)\sim \left(\begin{smallmatrix}\cdots & a & b & \cdots \\ \cdots & k+1 & k & \cdots\end{smallmatrix}\right)$ if $a\le b$
    \item $\left(\begin{smallmatrix}\cdots & b & a & \cdots \\ \cdots & k+1 & k+1 & \cdots\end{smallmatrix}\right)\sim \left(\begin{smallmatrix}\cdots & b & a & \cdots \\ \cdots & k+1 & k & \cdots\end{smallmatrix}\right)$ if $a< b$.
\end{enumerate}

\end{defn}
Note that these relations are defined on the set of plactic biwords only; namely we do not apply the relation (3) or (4) if the resulting word is no longer plactic. Our main theorem is as follows.

\begin{thm}
\label{thm:main}
For any $D\in\BPD(\pi)$, the set of plactic biwords
\[\words(D):=\{Q: \varphi(Q)=D\}\] is connected by the generalized Knuth relations.
\end{thm}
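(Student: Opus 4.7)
The plan is to prove Theorem~\ref{thm:main} by induction on the length of the biword, following the classical Lascoux-Sch\"utzenberger strategy: single out a canonical plactic biword for each BPD and show every biword in $\words(D)$ is equivalent to it under the generalized Knuth relations. Proposition~\ref{prop:associativity} provides complete flexibility in the insertion order, which will be exploited throughout.

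For each $D \in \BPD(\pi)$, we first define a canonical plactic biword $r(D) \in \words(D)$ via a distinguished uninsertion procedure on $D$ --- for example, a column-by-column reverse-insertion reading that extracts a distinguished last biletter $b_D$. The main inductive step is then the following: given any $Q \in \words(D)$, use the Knuth relations to transform $Q$ into a biword of the form $\tilde{Q} \cdot b_D$. Once achieved, associativity gives $\varphi(\tilde{Q}) = D \leftarrow b_D =: D'$, and the induction hypothesis applied to $\tilde{Q}$ and $r(D')$ completes the step.

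To structure the ``producing $b_D$ at the end'' subroutine, we decompose each plactic biword into column fragments $Q = Q^{(K)} Q^{(K-1)} \cdots Q^{(1)}$, where $Q^{(k)}$ consists of the consecutive biletters with bottom-row entry $k$. Relations (1) and (2) act within a single fragment $Q^{(k)}$ and coincide with the classical Knuth relations on the top-row subword; relations (3) and (4) shift a biletter between adjacent fragments $Q^{(k+1)}$ and $Q^{(k)}$. Accordingly, the strategy is to first use (3) and (4) to arrange the correct fragment structure near the end of $Q$, then use (1) and (2) within the target fragment to bring the desired top-row entry into place, and finally use (3) and (4) again to extract $b_D$ as the trailing biletter.

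The main obstacle is a bumping-lemma analog: one must show that each reverse-bumping step in the uninsertion of $b_D$ from $D$ is realizable by an explicit sequence of generalized Knuth moves on $Q$, preserving the plactic property at every intermediate stage. The classical Knuth proof carries out the analogous bookkeeping for Schensted bumping within a fixed tableau shape; in the BPD setting, bumping paths can cross column boundaries and interact nontrivially with the decorated Bruhat chain structure of $ch_R(Q)$, so substantial case analysis specific to BPD bumping --- together with a careful verification that the four relations suffice to generate every commutation induced by bumping --- will be required before the induction closes and Theorem~\ref{thm:main} follows.
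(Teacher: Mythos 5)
Your outline has the right general shape --- induction on length, a canonical representative in each class $\words(D)$, and the observation that relations (1)--(2) are classical Knuth moves inside a single $k$-fragment while (3)--(4) move biletters between adjacent fragments --- and this is indeed the skeleton of the paper's argument. But as written it is a plan, not a proof: the entire mathematical content of the theorem is concentrated in the step you defer at the end (``substantial case analysis specific to BPD bumping \dots will be required''). That step is precisely what Sections~\ref{sec:properties} and~\ref{sec:main-proof} of the paper are devoted to, and nothing in your proposal indicates how to carry it out, so there is a genuine gap.

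Two more specific points. First, your induction runs in the harder direction. You propose to take an arbitrary $Q\in\words(D)$ and Knuth-extract a distinguished \emph{last} biletter $b_D$ determined by uninsertion from $D$; showing that an arbitrary $Q$ can be rewritten to end in one prescribed biletter is essentially the whole theorem in disguise. The paper instead writes $Q=Q'\binom{a}{k}$ with whatever last biletter $Q$ happens to have, applies the inductive hypothesis to get $Q'\sim Q^{\max}_{\varphi(Q')}$, and reduces everything to the single statement $Q^{\max}_{\varphi(Q')}\binom{a}{k}\sim Q^{\max}_{D}$ --- so one only ever needs to append a biletter to the \emph{canonical} word, never to manipulate an arbitrary one. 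Second, the canonical word is not produced by an ad hoc uninsertion order: it is the maxword $Q^{\max}_D$, characterized by the bottom-row multiplicities matching $\maxcode(\pi)$, and its existence and uniqueness (via Lemma~\ref{lem:shortest-k-unique}) are themselves part of the proof. The append step is then settled by a four-case analysis on where the new inversion $t_{\alpha\beta}$ lands relative to $k_1$ and $k_\ell$, using the absorption and column/row-extraction lemmas (Lemmas~\ref{lem:left-absorb}, \ref{lem:spit-out-left}, \ref{lem:right-absorb}, \ref{lem:spit-out-right}, \ref{lem:grass-absorb}, \ref{lem:add-bottom-strip}, \ref{lem:add-bottom-strip-after-insertion}) together with the code/maxcode bookkeeping of Lemma~\ref{lem:code-maxcode-properties}. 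None of this machinery, or a substitute for it, appears in your proposal.
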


An immediate consequence of Theorem~\ref{thm:main} is that the set of  plactic biwords that insert into $D$ for any $D\in\BPD(\pi)$ using $\mathcal{L}$ (resp., $\mathcal{R}$) is in bijection with the set of mixed $k$-chains from $\id$ to $\pi$ with weakly increasing (resp., weakly decreasing) labels.

Equipped with the formulation of plactic biwords, we may restate our structure constant rule \cite{HP} for the separated descent case (see \cite{allentalk,huang2021schubert}, and also \cite{kogan2000schubert, lenart2010growth}) as counting certain pairs of plactic biwords instead of growth diagrams. Following \cite{HP}, given a permutation $\pi$, we let $d_1(\pi)$ denote the first descent of $\pi$ and $d_2(\pi)$ the last descent of $\pi$. 
\begin{thm}
    Let $\pi, \rho$, $\sigma$ be permutations such that $d_1(\pi)\ge d_2(\rho)$. Then for any $D\in\BPD(\sigma)$, as well as 
    \[ch_\pi=(\id\lessdot_{k_1}\pi_1\lessdot_{k_2}\cdots\lessdot_{k_{\ell(\pi)}}) \]
    where $d_1(\pi)\le k_1\le \cdots \le k_{\ell(\pi)}$, and 
    \[ch_\rho = (\id\lessdot_{m_1}\rho_1\lessdot_{m_2}\cdots \lessdot_{m_{\ell(\rho)}} \rho)\]
    where $d_2(\rho)\ge m_1\ge\cdots \ge m_{\ell(\rho)}$, 
    \[c_{\pi,\rho}^{\sigma}=\#\{(Q_\pi, Q_\rho): ch_L(Q_\pi)=ch_\pi, ch_R(Q_\rho)=ch_\rho, \varphi(Q_\pi Q_\rho)=D\}.\]
    Notice that by assumptions on descents, $Q_\pi Q_\rho$ is a plactic biword whose second row is  $k_{\ell(\pi)},\cdots, k_1, m_1,\cdots, m_{\ell(\rho)}$. 
\end{thm}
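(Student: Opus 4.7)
This theorem reformulates the separated-descent structure constant rule of \cite{HP}, originally proved via growth diagrams, in the plactic-biword language developed in this paper.  The plan is to translate between the two formulations using the bijections $\mathcal{L}, \mathcal{R}$ together with Proposition~\ref{prop:associativity}.

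The separated-descent hypothesis $d_1(\pi)\ge d_2(\rho)$ forces $Q_\pi Q_\rho$ to be a plactic biword.  Its second row reads $k_{\ell(\pi)},\ldots,k_1,m_1,\ldots,m_{\ell(\rho)}$; both halves are weakly decreasing by the assumptions on $ch_\pi$ and $ch_\rho$, and the junction satisfies $k_1\ge d_1(\pi)\ge d_2(\rho)\ge m_1$.  Hence by Proposition~\ref{prop:associativity}, $\varphi(Q_\pi Q_\rho)$ is well-defined and can be computed by first left-inserting $Q_\pi$ (from right to left) to form $D_\pi:=\varphi_L(Q_\pi)\in\BPD(\pi)$, and then right-inserting $Q_\rho$ (from left to right).

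I then use that $\mathcal{L}$ and $\mathcal{R}$ are bijections \cite{HP}: fixing the recording chain $ch_\pi$, the map $Q_\pi\mapsto\varphi_L(Q_\pi)$ is a bijection between $\{Q_\pi : ch_L(Q_\pi)=ch_\pi\}$ and $\BPD(\pi)$, and symmetrically for $Q_\rho$ and $\BPD(\rho)$.  Therefore the cardinality in the theorem rewrites as
\[\#\{(D_\pi,D_\rho)\in\BPD(\pi)\times\BPD(\rho) : D_\pi\leftarrow Q_\rho(D_\rho)=D\},\]
where $Q_\rho(D_\rho):=\mathcal{R}^{-1}(D_\rho, ch_\rho)$ is the unique plactic biword recovering the pair $(D_\rho, ch_\rho)$ under right insertion.

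The remaining step is to identify this count with the growth-diagram count of \cite{HP}.  The two-stage insertion ``left-insert $Q_\pi$, then right-insert $Q_\rho$'' unfolds as a growth diagram whose northern and eastern boundaries are traced by the chains $ch_\pi$ and $ch_\rho$, whose southwest corner is the empty BPD, and whose northeast corner is $D$; this is precisely the combinatorial data enumerated in \cite{HP}.  The main obstacle is this final matching: one must show that the growth diagrams in \cite{HP} correspond bijectively to pairs $(D_\pi, D_\rho)$ as above with chain decorations matching.  Since both formulations iterate the same insertion algorithms, the verification is essentially careful bookkeeping, but it is where the full separated-descent structure is used and where the independence of the count on the choices of $D$, $ch_\pi$, and $ch_\rho$ is inherited from the growth-diagram side.
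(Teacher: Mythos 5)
The paper gives no proof of this theorem --- it is presented as a direct restatement of the separated-descent structure constant rule of \cite{HP} --- and your plan (split the insertion of $Q_\pi Q_\rho$ at the junction via Proposition~\ref{prop:associativity}, use the bijectivity of $\mathcal{L}$ and $\mathcal{R}$ with fixed recording chains to pass to pairs of BPDs in $\BPD(\pi)\times\BPD(\rho)$, and finally match with the growth-diagram count) is precisely the translation the authors intend. The one step you defer as bookkeeping, identifying these pairs with the growth diagrams of \cite{HP}, is likewise deferred to \cite{HP} by the paper itself, so your proposal is consistent with, and more explicit than, the paper's own treatment.
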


In Section~\ref{sec:properties} we establish basic properties of plactic biwords and in Section~\ref{sec:main-proof} we prove our main theorem. In particular, we assume the knowledge of \cite[Section 3]{HP} for these developments. We remark that in the case of plactic biwords, the left and right insertion algorithms simplify, and the details are stated precisely in \cite[Lemma 5.1 and Lemma 5.2]{HP}.

\section{Properties of plactic biwords}
\label{sec:properties}
\begin{prop}
    If $Q_1$ and $Q_2$ are plactic biwords such that $Q_1\sim Q_2$, then $\varphi(Q_1)=\varphi(Q_2)$.
\end{prop}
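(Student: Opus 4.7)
Since $\sim$ is generated by the four local moves of Definition~\ref{def:knuth}, it suffices to show that each individual move preserves $\varphi$. Write $Q_1 = X B_1 Y$ and $Q_2 = X B_2 Y$ where $B_1 \sim B_2$ by one local move, with the biwords agreeing outside this window. By associativity (Proposition~\ref{prop:associativity}), I may compute each $\varphi(Q_i)$ by first inserting the biletters of $X$ into the empty BPD in any convenient order to obtain $D_X$, then inserting the biletters of $B_i$ into $D_X$, and finally inserting the biletters of $Y$. Hence the problem reduces to the following local statement: for any BPD $D$, successively inserting the biletters of $B_1$ into $D$ yields the same BPD as successively inserting those of $B_2$.

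For relations (1) and (2), every biletter shares the same second coordinate $k$. By \cite[Lemma 5.2]{HP} the right insertion of $\binom{a}{k}$ inside a $k$-biword simplifies to a minimal undroop controlled by $a$, so the moves $bac \sim bca$ and $acb \sim cab$ reduce to the classical Knuth moves on the $a$-labels. I would then adapt the standard plactic-monoid argument, performing a case analysis on the relative positions into which $a, b, c$ undroop in $D$ and checking in each case that the two orders of insertion yield identical BPDs. For relations (3) and (4) the $a$-values are fixed and only the $k$-labels change; using associativity a second time I would insert the second biletter first and reduce to showing that a single insertion of $\binom{a}{k}$ versus $\binom{a}{k+1}$ into the common BPD $D \leftarrow \binom{b}{k}$ gives the same result. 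This is a local statement about the insertion trajectory near row $k$, with the hypothesis $a \le b$ (resp.\ $a < b$) ensuring that the tile where $\binom{b}{k}$ has just been placed forces the trajectories at heights $k$ and $k+1$ to coincide.

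The main obstacle is the verification of relations (3) and (4), which are genuinely new and have no classical analogue: the argument requires carefully tracking how the plactic insertion procedure of \cite[Lemma 5.1, 5.2]{HP} depends on the second coordinate of the biletter, and in particular why shifting this coordinate by one does not alter the final BPD once the subsequent biletter has been processed. Relations (1) and (2), by contrast, should follow from transcribing the classical plactic argument into the simplified insertion of \cite[Lemma 5.2]{HP}, with the $k$-biword structure making the correspondence with the classical proof essentially transparent.
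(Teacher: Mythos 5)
There is a genuine gap here, and it originates in the direction in which you apply associativity. Writing $Q_1=XB_1Y$ and $Q_2=XB_2Y$, you choose to build $D_X=\varphi(X)$ first and are then forced to verify the local moves for insertion into an \emph{arbitrary} base BPD $D_X$; all of the actual verification (``a case analysis on the relative positions into which $a,b,c$ undroop in $D$'', ``a local statement about the insertion trajectory near row $k$'') is deferred, and you yourself flag relations (3) and (4) as an unresolved obstacle. The efficient use of Proposition~\ref{prop:associativity} is the opposite one: start the computation of $\varphi(Q_i)$ \emph{at the window} $B_i$, compute $\varphi(B_i)$ as a standalone biword of length $2$ or $3$ inserted into the empty BPD, and only afterwards adjoin the biletters of $X$ by left insertion and of $Y$ by right insertion; once $\varphi(B_1)=\varphi(B_2)$ is known, the remaining insertions proceed identically. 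This reduction is what makes the local checks tractable. In particular, for relations (1) and (2) the window is a $k$-biword of length $3$, its insertion BPD is Grassmannian, and the insertion is literally Schensted's, so the classical Knuth relations give $\varphi(B_1)=\varphi(B_2)$ with no case analysis at all; under your reduction $D_X$ need not be Grassmannian, so even this step is not the ``transparent'' transcription you claim.

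For relations (3) and (4) the window has length $2$, and after inserting the single biletter $\binom{b}{k}$ (resp.\ $\binom{b}{k+1}$) into the empty BPD the intermediate BPD is completely explicit: a single blank at $(b,b)$, a single cross at $(k+1,k+1)$ (resp.\ $(k+2,k+2)$), and elbow tiles on the diagonal in between. One then traces the second insertion directly: for (3), left inserting $\binom{a}{k}$ with $a\le b$ forces the final min-droop to land at $(k,k+1)$, crossing the pipes that exit rows $k$ and $k+2$, and left inserting $\binom{a}{k+1}$ produces the same final configuration; relation (4) is the mirror statement for right insertion, with the final min-droop at $(k+1,k)$. This explicit trace is precisely the missing content of your proposal: without pinning the base BPD down to the empty one you cannot carry it out, whereas with that reduction the verification becomes a short direct computation rather than an open-ended case analysis.
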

\begin{proof}
    It suffices to show when $Q_1$ and $Q_2$ differ by a single generalized Knuth move, and by Proposition~\ref{prop:associativity} it suffices to consider the case when they are length 3 or 2 and all biletters participate in the application of the relation. If $Q_1$ and $Q_2$ differ by relations (1) and (2) in Definition~\ref{def:knuth}, which corresponds to the ordinary Knuth moves, the claim follows from the observation that our insertion algorithms on $k$-biwords correspond to classical Schensted's insertion algorithms. For relation (3), we consider first the insertion of a single biletter $\binom{b}{k}$. The resulting BPD contains a single blank on the diagonal at $(b,b)$, a single $\+$ at $(k+1,k+1)$, and a $\jt$ at each $(i,i)$ for $b<i\le k$. If we then left insert $\binom{a}{k}$ for $a \le b$, the final $\mindroop$ is always at $(k,k+1)$ which causes the pipes that exit from row $k$ and row $k+2$ to cross. Therefore we see that left-inserting $\binom{a}{k+1}$ would have the same effect. For relation (4), we consider first the insertion of a single biletter $\binom{b}{k+1}$. The resulting BPD contains a single blank on the diagonal at $(b,b)$, a single $\+$ at $(k+2,k+2)$, and a $\jt$ at each $(i,i)$ for $b<i\le k+1$. If we then right insert $\binom{a}{k+1}$ for $a<b$, during the algorithm there must be a $\mindroop$ at $(b-1,b-1)$ followed by a $\mindroop$ at $(b,b-1)$, by definition of right insertion. The final $\mindroop$ must be at $(k+1,k)$, causing the pipes that exit from row $k$ and $k+2$ to swap. We can then see that right inserting $\binom{a}{k}$ instead of $\binom{a}{k+1}$ has the same effect.
\end{proof}

\begin{defn}
    Let $\pi$ be a permutation. Define $\code(\pi):\mathbb{Z}^{>0}\to \mathbb{Z}^{\ge 0}$ such that \[\code(\pi)(i)=\#\{j>i:\pi(j)<\pi(i)\}.\] In other words, $\code(\pi)(i)$ counts the number of blank boxes in the $i$th row of the Rothe diagram of $w$. This permutation statistics is known as the \textbf{inversion code} or \textbf{Lehmer code} of $\pi$.
\end{defn}
\begin{defn}
Let $\pi$ be a permutation. Define $\maxcode(\pi):\mathbb{Z}^{>0}\to \mathbb{Z}^{\ge 0}$ such that
\[\maxcode(\pi)(i)=\#\{j<i:\pi(j)>\pi(i)\}.\]
In other words, $\maxcode(\pi)(i)$ counts the number of crosses in the $i$th row of the Rothe diagram of $\pi$.
\end{defn}

\begin{ex}
    Let $\pi=13574862$. Then $\code(\pi)=(0,1,2,3,1,2,1,0,0,\cdots)$ and
        $\maxcode(\pi)=(0,0,0,0,2,0,2,6,0,0,\cdots)$.
\end{ex}

The following lemma is straightforward from the definition of $\code$ and $\maxcode$. We omit the proof.
\begin{lemma}
   \label{lem:code-maxcode-properties}
   Let $\pi$ be a permutation and suppose $\pi t_{\alpha \beta}\gtrdot \pi$. Let \[m_\alpha:=\#\{i>\beta: \pi(\alpha)<\pi(i)<\pi(\beta)\}\] and \[m_{\beta}:=\#\{i<\alpha: \pi(\alpha)<\pi(i)<\pi(\beta)\}.\] Then
   \begin{enumerate}[(a)]
       \item For $i\neq \alpha$ and $i\neq \beta$, \[\code(\pi)(i)=\code(\pi t_{\alpha\beta})(i)\] and \[\maxcode(\pi)(i)=\maxcode(\pi t_{\alpha\beta})(i).\] 
       \item \[\code(\pi t_{\alpha\beta})(\alpha)=\code(\pi)(\alpha)+m_{\alpha}+1\] and \[\code(\pi t_{\alpha\beta})(\beta)=\code(\pi)(\beta)-m_\alpha.\]
       \item \[\maxcode(\pi t_{\alpha\beta})(\alpha)=\maxcode(\pi)(\alpha)-m_\beta\] and \[\maxcode(\pi t_{\alpha\beta})(\beta)=\maxcode(\pi)(\beta)+m_\beta+1. \]
   \end{enumerate} 
\end{lemma}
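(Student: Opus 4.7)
The plan is to unpack the definitions directly. The covering relation $\pi t_{\alpha\beta} \gtrdot \pi$ forces $\alpha < \beta$, $\pi(\alpha) < \pi(\beta)$, and the absence of any $\alpha < i < \beta$ with $\pi(\alpha) < \pi(i) < \pi(\beta)$. The permutation $\pi t_{\alpha\beta}$ differs from $\pi$ only at positions $\alpha$ and $\beta$, where the values are swapped. Everything then reduces to a case analysis on the position $i$ relative to $\alpha, \beta$ together with which side the witnessing index $j$ lies.

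For part (a), fix $i \notin \{\alpha,\beta\}$. Indices $j \notin \{\alpha,\beta\}$ contribute identically to $\code(\pi)(i)$ and $\code(\pi t_{\alpha\beta})(i)$, so only $j=\alpha$ and $j=\beta$ need comparison, and only when they exceed $i$. If $i<\alpha$ or $i>\beta$, the pair $\{\pi(\alpha),\pi(\beta)\}$ is compared to $\pi(i)$ as an unordered pair in both counts, so the contribution agrees. If $\alpha<i<\beta$, only $j=\beta$ is relevant and the condition flips from $\pi(\beta)<\pi(i)$ to $\pi(\alpha)<\pi(i)$; the covering hypothesis forces either $\pi(i)<\pi(\alpha)$ or $\pi(i)>\pi(\beta)$, in which case the two truth values agree. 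The $\maxcode$ equality is symmetric, with $j<i$ replacing $j>i$.

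For parts (b) and (c) I do the counts directly. For $\code(\pi t_{\alpha\beta})(\alpha)$, the index $j=\beta$ always contributes $+1$ since $\pi(\alpha)<\pi(\beta)$; for the remaining $j>\alpha$, $j\ne\beta$, the condition $\pi(j)<\pi(\beta)$ splits as $\pi(j)<\pi(\alpha)$ or $\pi(\alpha)<\pi(j)<\pi(\beta)$. The first piece recovers $\code(\pi)(\alpha)$, and by the covering condition the second piece receives no contribution from $\alpha<j<\beta$, so it is exactly $m_\alpha$, giving $\code(\pi t_{\alpha\beta})(\alpha)=\code(\pi)(\alpha)+m_\alpha+1$. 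For $\code(\pi t_{\alpha\beta})(\beta)$, the count is $\#\{j>\beta:\pi(j)<\pi(\alpha)\}$, which differs from $\code(\pi)(\beta)$ by precisely the indices $j>\beta$ with $\pi(\alpha)<\pi(j)<\pi(\beta)$, namely $m_\alpha$. The two $\maxcode$ identities follow from the same template after exchanging left and right: the fixed contribution of $+1$ comes from $j=\alpha$ in $\maxcode(\pi t_{\alpha\beta})(\beta)$, and the covering condition again confines the middle band $\pi(\alpha)<\pi(j)<\pi(\beta)$ to $j<\alpha$, producing $m_\beta$.

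The argument is entirely routine; the only subtlety is the careful bookkeeping around $\alpha<j<\beta$, and this is exactly where the covering hypothesis is used, ensuring that $m_\alpha$ and $m_\beta$ as defined collect all middle-band indices to the right of $\beta$ and to the left of $\alpha$ respectively. Once this split is in place, each of the six claimed identities drops out.
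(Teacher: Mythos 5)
Your proof is correct and complete; the paper actually omits the proof of this lemma as "straightforward from the definition," and your direct case analysis — isolating the contributions of $j=\alpha$ and $j=\beta$ and using the covering condition to empty the middle band $\alpha<j<\beta$ with $\pi(\alpha)<\pi(j)<\pi(\beta)$ — is exactly the routine verification the authors had in mind.
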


From now on, for a word $\mathbf{a}$, we let $S(\mathbf{a})$ denote the insertion semistandard tableau obtained from performing Schensted's insertion. Given a semistandard tableau $S$, we let $\shape(S)$ denote its shape and $\colread(S)$ denote its column reading word; that is, the word obtained by reading $S$ by columns from left to right, and within each column from bottom to top.

\begin{lemma}
    \label{lem:left-absorb}
    Let $Q=\binom{\mathbf{a}}{k}$ be a $k$-biword, then the following are equivalent:
    \begin{enumerate}[(a)]
        \item $\binom{a}{k+1}Q\sim Q'$ where $Q'$ is a $k$-biword,
        \item left inserting $a$ into $\ins(\mathbf{a})$ does not add a new row, 
        \item $\binom{a}{k+1}\rightarrow \varphi(Q)$ does not create a descent at $k+1$.
    \end{enumerate}
   
\end{lemma}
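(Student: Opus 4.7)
My plan is to close the cycle of implications (b) $\Rightarrow$ (a) $\Rightarrow$ (c) $\Rightarrow$ (b), combining classical plactic theory (for (b) $\Rightarrow$ (a)) with the SSYT--BPD correspondence from \cite[Section 5]{HP} (for the other two implications). For (b) $\Rightarrow$ (a): if column-inserting $a$ into $S(\mathbf{a})$ does not add a new row, then the bottom entry $b$ of the first column of $S(\mathbf{a})$ satisfies $b \ge a$. Since relations (1) and (2) restricted to a $k$-biword coincide with the classical Knuth relations on the underlying word, $Q$ is equivalent to $Q'' = \binom{\colread(S(\mathbf{a}))}{k,\ldots,k}$, whose first letter is $b$. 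Applying relation (3) at the boundary---valid because $a \le b$---then transforms $\binom{a}{k+1} Q''$ into the $k$-biword $\binom{a,\colread(S(\mathbf{a}))}{k,\ldots,k}$.

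For (a) $\Rightarrow$ (c): by Proposition~\ref{prop:associativity} together with the invariance of $\varphi$ under the generalized Knuth relations (the first proposition of this section), the hypothesis $\binom{a}{k+1}Q \sim Q'$ for some $k$-biword $Q'$ gives $\binom{a}{k+1} \rightarrow \varphi(Q) = \varphi(Q')$. A $k$-biword's BPD arises from a chain of mixed $k$-Bruhat covers all decorated by $k$, so its permutation is Grassmannian with descent set contained in $\{k\}$; hence $\varphi(Q')$---and therefore $\binom{a}{k+1}\rightarrow \varphi(Q)$---has no descent at $k+1$.

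For (c) $\Rightarrow$ (b): by \cite[Lemma 5.1]{HP}, the operation $\binom{a}{k+1}\rightarrow \varphi(Q)$ mirrors classical column insertion of $a$ into $S(\mathbf{a})$ under the BPD--SSYT bijection. When column insertion adds no new row, the resulting BPD stays in the Grassmannian-descent-$k$ stratum and its permutation acquires no descent at $k+1$; when a new row is added, the BPD moves to the Grassmannian-descent-$(k+1)$ stratum, and a descent at $k+1$ is created. Consequently, absence of a descent at $k+1$ forces that no new row was added.

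The main obstacle is the rigorous verification of the descent-tracking claim in (c) $\Rightarrow$ (b), namely that the droop-based left-insertion algorithm of \cite[Section 3]{HP} applied at level $k+1$ on a $k$-biword BPD produces a permutation with a descent at $k+1$ exactly when column insertion creates a new row. In particular, when the updated SSYT's shape has $r + 1 \le k$ rows (so it could in principle be encoded in either the descent-$k$ or descent-$(k+1)$ Grassmannian stratum), one must confirm that the insertion algorithm picks out the descent-$(k+1)$ stratum precisely in the ``new row'' case.
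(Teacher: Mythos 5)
Your cycle of implications and your treatment of the first two arrows match the paper's proof. For (b) $\Rightarrow$ (a) the paper does exactly what you do: replace $\mathbf{a}$ by $\colread(\ins(\mathbf{a}))$ using the classical Knuth relations (which are relations (1) and (2) on a $k$-biword), observe that the first letter is the bottom entry $c$ of the first column and that ``no new row'' forces $a\le c$, and apply relation (3) at the boundary. For (a) $\Rightarrow$ (c) the paper simply notes that $\sim$ preserves the insertion BPD; your elaboration via the $k$-Grassmannian property of $\varphi(Q')$ is the intended reasoning.

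The genuine gap is exactly where you flag it: in (c) $\Rightarrow$ (b) you assert, but do not verify, that adding a new row is equivalent to creating a descent at $k+1$, and your ``stratum'' language does not by itself resolve the ambiguity you point out (a tableau with at most $k$ rows could a priori be encoded by a Grassmannian permutation with descent at either $k$ or $k+1$). The paper closes this by proving the contrapositive with a direct trace of the insertion algorithm on the BPD: if left inserting $a$ into $\ins(\mathbf{a})$ adds a new row, then $a>c$ for $c$ the bottom entry of the first column, so the insertion path starts at the leftmost r-tile in row $a$; since a bumping step of Schensted left insertion corresponds precisely to a droop into a blank tile, the path never droops into a blank tile, and its final $\mindroop$ forms a bump tile with the unique r-tile in row $k+1$, forcing the pipes exiting rows $k+1$ and $k+2$ to cross and hence creating a descent at $k+1$. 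To complete your proof you would need to supply this (or an equivalent) algorithmic verification rather than deferring it; as written, the implication (c) $\Rightarrow$ (b) is only a claim.
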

\begin{proof}
    %For the forward direction, suppose $\binom{a}{k+1}Q\sim Q'$ where $Q'$ is a $k$-biword. Then it must be the case that $Q\sim\binom{b}{k}\widehat{Q}$ via usual Knuth moves where $\widehat{Q}$ is a $k$-biword and $b\ge a$. Then $b$ must have been obtained by performing column uninsertion of an outer box of $\ins(\mathbf{a})$. It follows that $b\le c$, and therefore left inserting $a$ into $\ins(\mathbf{a})$ does not add a new row.
    (b) $\Rightarrow$ (a): Since $\mathbf{a}\sim \colread(\ins(\mathbf{a}))$ we can without loss of generality assume $\mathbf{a}=\colread(\ins(\mathbf{a}))$. Let $c$ denote the bottom-most entry in the first column of $\ins(\mathbf{a})$. Suppose left inserting $a$ into $\ins(\mathbf{a})$ does not add a new row, then it must be the case that $a\le c$. Since $\binom{c}{k}$ is the first biletter of $Q$, we can apply (3) of the generalized Knuth relations and the result follows.

    (a) $\Rightarrow$ (c): This is straightforward since $\sim$ preserves the insertion BPD.

    (c) $\Rightarrow$ (b):
    Suppose left inserting $a$ into $\ins(\mathbf{a})$ does add a new row. Since a bumping step of Schensted left insertion on SSYTs corresponds to drooping into a blank tile of our left insertion on Grassmannian BPDs,   the left insertion of $\binom{a}{k}$ into $D:=\varphi(Q)$ starts from the leftmost $\rt$ in row $a>c$, never droops into a blank tile, and the last $\mindroop$
    must create a $\bt$ with the only $\rt$ in row $k+1$, so inserting $\binom{a}{k+1}$ must create a descent at $k+1$. See the illustration below.
    \begin{center}
        \includegraphics[scale=0.6]{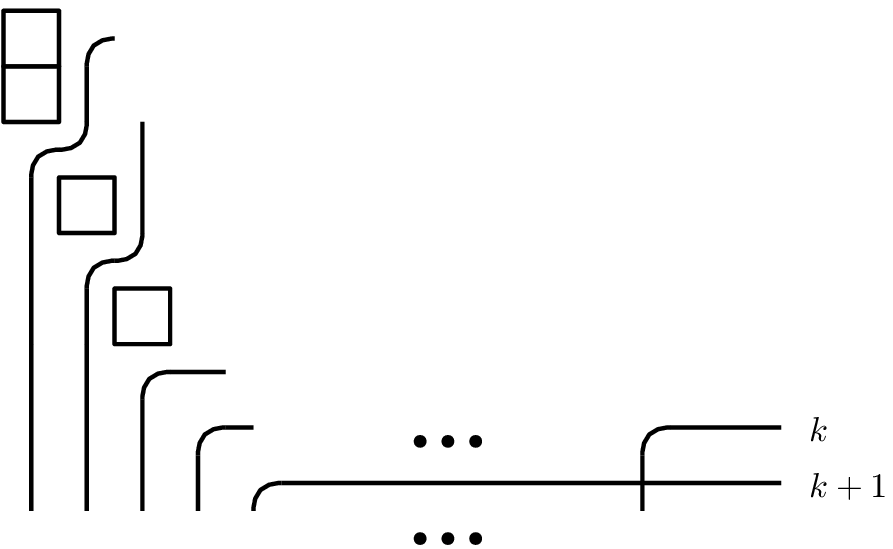}
    \end{center}
\end{proof}

\begin{lemma}
    \label{lem:spit-out-left}
    Let $Q=\binom{\mathbf{a}}{k}$ be a $k$-biword. Suppose $\lambda:=\shape(\ins(\mathbf{a}))$  where $\lambda^t=(c_1,\cdots, c_m)$ with $c_1\ge \cdots \ge c_m$. Namely, the column heights of $\lambda$ are $c_1,\cdots, c_m$. Then $Q\sim Q_1C$ where $Q_1=\binom{\mathbf{a}_1}{k+1}$ is a $(k+1)$-biword with $\shape(\ins(\mathbf{a}_1))^t=(c_2,\cdots, c_m)$ and $C=\binom{\mathbf{a}_c}{k}$ is a $k$-biword with $\shape(\ins(\mathbf{a}_c))^t=(c_1)$.
\end{lemma}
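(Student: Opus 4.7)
The plan is to induct on the length $\ell$ of $\mathbf{a}$. The base case $\ell=1$ is immediate, with $Q_1$ empty and $C=Q$. For the inductive step, I would use the classical Knuth moves (relations $(1)$ and $(2)$) to rewrite $\mathbf{a}$ in the form $w=w'\cdot a^*$, where $a^*$ corresponds, via reverse row-insertion, to a carefully chosen removable corner of $T:=\ins(\mathbf{a})$. Applying the inductive hypothesis to the shorter $k$-biword $\binom{w'}{k}$ yields $\binom{w'}{k}\sim Q_1'C'$, and the remaining work is to combine this equivalence with the leftover biletter $\binom{a^*}{k}$.

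The choice of removable corner splits into two subcases. When $c_1>c_2$, I would peel the corner at position $(c_1,1)$, so that $\ins(w')$ has column heights $(c_1-1,c_2,\ldots,c_m)$ and $a^*$ is forced to be strictly less than the top entry of the first column of $\ins(w')$. I would then set $Q_1:=Q_1'$ and $C:=C'\cdot\binom{a^*}{k}$; the verification that $\ins(C)$ is a single column of height $c_1$ reduces to showing that $a^*$ is strictly less than the minimum entry of $\ins(C')$. When instead $c_1=c_2$, the shape $(c_1-1,c_2,\ldots)$ fails to be a partition, so I would peel a corner in some column $j\ge 2$ instead; then $\ins(w')$ retains the first column of $T$, the inductive hypothesis yields $C=C'$ directly, and $\binom{a^*}{k}$ must be absorbed into $Q_1'$ by sliding it leftward across $C'$ and promoting its label from $k$ to $k+1$, using relations $(3)$ and $(4)$ at the mixed-label boundary interleaved with classical Knuth moves at the $k$-level.

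I expect the absorption step to be the main obstacle in both subcases, since the inductive hypothesis pins down only the shapes of $\ins(Q_1')$ and $\ins(C')$ and not their contents. In the first subcase, I need the minimum entry of $\ins(C')$ to exceed $a^*$; in the second, I need to construct, uniformly in the (unknown) content of $C'$, a chain of generalized Knuth moves that transports $\binom{a^*}{k}$ leftward across $C'$ while maintaining the plactic condition at every intermediate step. I would likely need to strengthen the inductive statement to track just enough content information (e.g., that $\ins(C')$ coincides with the first column of $\ins(w')$), and handle the second subcase by a secondary induction on $\len(C')$, with each elementary step justified by Lemma~\ref{lem:left-absorb}, which characterizes precisely when a biletter $\binom{a}{k+1}$ can be inserted on the left into, or extracted on the left from, a $k$-biword.
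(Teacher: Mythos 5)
Your plan diverges from the paper's proof, which avoids a letter-by-letter induction entirely: it picks a standard recording tableau $T$ of shape $\lambda$ whose first column is $1,\dots,c_1$, uses reverse left insertion in the order dictated by $T$ to factor $\mathbf{a}\sim\mathbf{a}_1\mathbf{a}_c$ with $\ins(\mathbf{a}_c)$ a single column of height $c_1$, promotes all of $\mathbf{a}_1$ to level $k+1$ by iterating Lemma~\ref{lem:left-absorb} (no letter of $\mathbf{a}_1$ ever adds a new row, since every cell of $T$ labelled $>c_1$ lies in a column $\ge 2$), and reads off $\shape(\ins(\mathbf{a}_1))$ from the fact that right-inserting the column word $\mathbf{a}_c$ into $\ins(\mathbf{a}_1)$ adds a vertical strip and recovers $\lambda$. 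Your subcase $c_1>c_2$ could be salvaged, and in fact without any strengthened hypothesis: the generalized Knuth moves preserve the multiset of top-row letters, so $\min(\ins(C'))\ge\min(w')$, while $a^*$ is reverse-bumped through the entire first column and is therefore smaller than $\min(w')$.

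The subcase $c_1=c_2$, however, rests on a false claim, not merely an unfinished one. Take $\mathbf{a}=3412$, so $\ins(\mathbf{a})=\left(\begin{smallmatrix}1&2\\3&4\end{smallmatrix}\right)$ and $(c_1,c_2)=(2,2)$; the only removable corner sits in column $2$, giving $a^*=2$ and $w'=341$. The inductive hypothesis forces $Q_1'=\binom{3}{k+1}$ and $C'=\binom{4,1}{k}$ (the alternative $\binom{4}{k+1}\binom{3,1}{k}$ fails because left-inserting $4$ into the column $\{1,3\}$ adds a new row, so by Lemma~\ref{lem:left-absorb} it creates a descent at $k+1$ and inserts to the wrong BPD). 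The correct conclusion of the lemma for $\mathbf{a}$ is $\binom{3412}{k}\sim\binom{3,1}{k+1}\binom{4,2}{k}$: here $C=\binom{4,2}{k}\neq C'$, and the letter promoted to level $k+1$ is $1$, not $a^*=2$. The only decomposition with $C=C'$ and the correct letter multiset would be $\binom{3,2}{k+1}\binom{4,1}{k}$, which collapses to $\binom{3241}{k}$ and has a three-row insertion tableau, hence a different BPD. The same example kills your proposed strengthening that $\ins(C')$ is the first column of $\ins(w')$: that column is $\{1,3\}$, whereas $\ins(C')=\{1,4\}$. The structural point you are missing is that when $a^*$ is pushed left through the column $C'$ it acts by (reverse) column insertion: it lodges inside the column and ejects a \emph{different}, smaller letter out the left end, so both $C$ and the biletter handed to $Q_1'$ change. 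Repairing this forces you to reprove the two-column case of the lemma inside your inductive step and then to control how the ejected letter interacts with $\ins(\mathbf{a}_1')$ -- which is precisely the bookkeeping the paper's one-shot factorization through the recording tableau is designed to avoid.
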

\begin{proof}
    Let $T$ be a standard tableau of shape $\lambda$ where the first column has content $1,\cdots, c_1$. Perform reverse left insertion on $\ins(\mathbf{a})$ using the order determined by $T$, we get $\mathbf{a}_1\mathbf{a}_c\sim\mathbf{a}$ where $\ell(\mathbf{a}_c)=c_1$ and $\shape(\ins(\mathbf{a}_c))^t=(c_1)$. By Lemma~\ref{lem:left-absorb}, $Q\sim \binom{\mathbf{a}_1}{k+1}\binom{\mathbf{a}_c}{k}$. Now $\ins(\mathbf{a}_1)\leftarrow \mathbf{a}_c=\ins(\mathbf{a})$. Since right inserting a column must add a column strip to the partition shape, we must have $\shape(\ins(\mathbf{a}_1))^t=(c_2,\cdots, c_m)$.
\end{proof}
\begin{ex} We use the notation $\frac{T}{k}$ where $T$ is an SSYT to denote the set of biwords $\binom{\mathbf{a}}{k}$ where $\ins(\mathbf{a})=T$. The following equivalences give examples for Lemma~\ref{lem:spit-out-left}:
\[\frac{\left(\begin{smallmatrix}
    1 & 1 & 2 & 3 \\
    2 & 3 & 4 &   \\
    3 &   &   &
\end{smallmatrix}\right)}{4} \sim \frac{\left(\begin{smallmatrix}
    1 & 2 & 2\\
    3 & 3 &  \\
\end{smallmatrix}\right)}{5}
\frac{\left(\begin{smallmatrix}
    1\\3\\4
\end{smallmatrix}\right)}{4}\sim
\frac{\left(\begin{smallmatrix}
    1 & 2 \\
    3 &
\end{smallmatrix}\right)}{6}\frac{\left(\begin{smallmatrix}
    2 \\
    3 \\
\end{smallmatrix}\right)}{5}
\frac{\left(\begin{smallmatrix}
    1\\3\\4
\end{smallmatrix}\right)}{4}\sim
\frac{(\begin{smallmatrix}
    1
\end{smallmatrix})}{7}
\frac{\left(\begin{smallmatrix}
    2 \\
    3 
\end{smallmatrix}\right)}{6}\frac{\left(\begin{smallmatrix}
    2 \\
    3 \\
\end{smallmatrix}\right)}{5}
\frac{\left(\begin{smallmatrix}
    1\\3\\4
\end{smallmatrix}\right)}{4}.
\]
\end{ex}
\begin{lemma}
    \label{lem:right-absorb}
    Let $Q=\binom{\mathbf{a}}{k+1}$ be a $k+1$-biword, then the following are equivalent:
    \begin{enumerate}[(a)]
        \item $Q\binom{a}{k}\sim Q'$ where $Q'$ is a $k+1$-biword,
        \item right inserting $a$ into $\ins(\mathbf{a})$ does not add a new column,
        \item $\varphi(Q)\leftarrow \binom{a}{k}$ does not create a descent at $k$.
    \end{enumerate}
\end{lemma}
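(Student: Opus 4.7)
The proof will mirror that of Lemma~\ref{lem:left-absorb} almost verbatim, exchanging left insertion for right insertion, relation (3) for relation (4), the column reading word for the row reading word, and the role of ``bottom-most entry of column $1$'' for ``rightmost entry of row $1$''. Concretely, I will prove (b) $\Rightarrow$ (a) $\Rightarrow$ (c) $\Rightarrow$ (b).

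For (b) $\Rightarrow$ (a), I would use that $\mathbf{a}$ is Knuth equivalent to the reverse row reading word of $\ins(\mathbf{a})$, so we may assume $\mathbf{a}$ ends with $b$, the rightmost entry of the first row of $\ins(\mathbf{a})$. If right inserting $a$ into $\ins(\mathbf{a})$ does not add a new column, then the bumping path starts in row $1$, which forces $a < b$ (since Schensted row insertion bumps the leftmost strictly greater entry). Applying relation (4) in reverse to the final two biletters $\binom{b}{k+1}\binom{a}{k}$ of $Q\binom{a}{k}$ produces $\binom{b}{k+1}\binom{a}{k+1}$, making the entire word a $(k+1)$-biword.

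For (a) $\Rightarrow$ (c), the previous proposition gives $\varphi(Q\binom{a}{k}) = \varphi(Q')$. Since $Q'$ is a $(k+1)$-biword, $\varphi(Q')$ is a BPD for a Grassmannian permutation whose unique descent (if any) is at $k+1$, hence it has no descent at $k$.

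For (c) $\Rightarrow$ (b), I prove the contrapositive. If right inserting $a$ into $\ins(\mathbf{a})$ adds a new column, then $a$ extends the first row without bumping, i.e., $a \ge b$. The dual of the BPD analysis in the proof of Lemma~\ref{lem:left-absorb}, now applied to right insertion and with columns in the roles formerly played by rows, shows that the right insertion of $\binom{a}{k}$ into $D := \varphi(Q)$ begins in column $a$, never droops into a blank tile, and terminates with a $\mindroop$ at $(k+1,k)$ creating a $\bt$ with the unique $\rt$ in column $k$; this forces the pipes exiting from columns $k$ and $k+2$ to swap, producing a descent at $k$. The main obstacle is confirming this dual BPD picture, but it is the exact mirror image of the figure and argument already given for left insertion, so once the symmetry is set up the verification is mechanical.
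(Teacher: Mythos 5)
Your proposal is correct and takes exactly the paper's approach: the paper's entire proof of this lemma reads ``This is similar to the proof of Lemma~\ref{lem:left-absorb},'' and you have carried out precisely that dualization (row reading word in place of column reading word, relation (4) in place of relation (3), $a\ge b$ versus $a<b$ at the end of the first row, and the mirrored BPD picture with the final \textsf{min-droop} at $(k+1,k)$, matching what the paper itself asserts in the relation-(4) case of the first proposition of Section~\ref{sec:properties}). The only quibble is that in the final BPD step the paper's analogous statement refers to the pipes exiting \emph{rows} $k$ and $k+2$ rather than columns, but this does not affect the argument.
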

\begin{proof}
This is similar to the proof of Lemma~\ref{lem:left-absorb}.    
\end{proof}

\begin{lemma}
    \label{lem:spit-out-right}
    Let $Q=\binom{\mathbf{a}}{k+1}$ be a $k+1$-biword. Suppose $\lambda:=\shape(\ins(\mathbf{a}))$  where $\lambda=(r_1,\cdots, r_m)$ with $r_1\ge \cdots \ge r_m$. Namely, the row lengths of $\lambda$ are $r_1,\cdots, r_m$. Then $Q\sim RQ_1$ where $Q_1=\binom{\mathbf{a}_1}{k}$ is a $k$-biword with $\shape(\ins(\mathbf{a}_1))=(r_2,\cdots, r_m)$ and $R=\binom{\mathbf{a}_r}{k+1}$ is a $k+1$-biword with $\shape(\ins(\mathbf{a}_r))=(r_1)$.
\end{lemma}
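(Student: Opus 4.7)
My plan is to mirror the proof of Lemma \ref{lem:spit-out-left}, exchanging the roles of rows and columns throughout and replacing Lemma \ref{lem:left-absorb} with Lemma \ref{lem:right-absorb}. To begin, I would choose a standard tableau $T$ of shape $\lambda$ whose top row carries the labels $1, 2, \ldots, r_1$. Such $T$ exists because the complement $\lambda \setminus \{(1,j) : 1 \le j \le r_1\}$ is itself a partition shape (rows $r_2, \ldots, r_m$ shifted down one row) and may be filled in any standard fashion with the remaining labels $r_1+1, \ldots, n$. I would then perform reverse right insertion on $\ins(\mathbf{a})$ using $T$ as the recording tableau, obtaining a word $\mathbf{a}^* = \mathbf{a}_r \mathbf{a}_1$ with $\ell(\mathbf{a}_r) = r_1$ and $\mathbf{a}^* \sim \mathbf{a}$ in the classical plactic sense. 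Since the first $r_1$ steps of the right insertion of $\mathbf{a}^*$ fill the top row of $T$, we have $\shape(\ins(\mathbf{a}_r)) = (r_1)$, and hence $Q \sim \binom{\mathbf{a}^*}{k+1}$ by Knuth moves of types (1) and (2) applied within the $(k+1)$-biword.

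Next I would convert $\binom{\mathbf{a}^*}{k+1}$ into $RQ_1 = \binom{\mathbf{a}_r}{k+1}\binom{\mathbf{a}_1}{k}$ by applying Lemma \ref{lem:right-absorb} iteratively to switch the labels of the biletters of $\mathbf{a}_1$ from $k+1$ to $k$, starting with the rightmost biletter $\binom{a^*_n}{k+1}$ and working backward to $\binom{a^*_{r_1+1}}{k+1}$. At each intermediate stage the biword is a $(k+1)$-biword prefix followed by a (growing) $k$-biword suffix; the next label switch is effected by applying Lemma \ref{lem:right-absorb} to the $(k+1)$-prefix alone, which comprises only local moves internal to that prefix and leaves the $k$-suffix untouched. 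The hypothesis (b) of the lemma --- that right-inserting the biletter being absorbed into the preceding tableau adds no new column --- holds because the corresponding box in $T$ has label $>r_1$, so it lies in some row $p \ge 2$ at some column $q \le r_1$; and the box $(1,q)$ has label $q \le r_1$, so it is still present in the preceding tableau, meaning column $q$ is already occupied.

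Finally, the identity $\shape(\ins(\mathbf{a}_1)) = (r_2, \ldots, r_m)$ follows from Schützenberger's rectification theorem: the skew subtableau of $\ins(\mathbf{a}^*)$ supported on the boxes with $T$-labels $>r_1$ is a filling of $\lambda/(r_1)$ that rectifies to $\ins(\mathbf{a}_1)$, and this rectification has straight shape $(r_2, \ldots, r_m)$. The main subtlety I expect is the iterative step in the second paragraph --- ensuring that each successive application of Lemma \ref{lem:right-absorb} is valid in the presence of the accumulating $k$-suffix --- but this is resolved by the observation that the relevant Knuth move sequence is entirely internal to the $(k+1)$-prefix and preserves the plactic (non-increasing label) condition on all intermediate biwords.
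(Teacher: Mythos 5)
Your overall strategy is exactly the paper's: the proof is declared to be ``similar to'' Lemma~\ref{lem:spit-out-left}, i.e.\ choose a standard tableau $T$ of shape $\lambda$ whose first row is $1,\dots,r_1$, reverse-insert in the order of $T$ to write $\mathbf{a}\sim\mathbf{a}_r\mathbf{a}_1$ with $\shape(\ins(\mathbf{a}_r))=(r_1)$, and use Lemma~\ref{lem:right-absorb} to lower the labels of the biletters of $\mathbf{a}_1$ from $k+1$ to $k$ one at a time. Your first two paragraphs carry this out correctly, and your explicit check that each label switch is effected by moves internal to the $(k+1)$-prefix (so the accumulating $k$-suffix never obstructs plasticity) is a useful elaboration of what the paper leaves implicit.

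The justification in your last paragraph, however, contains a false assertion. The skew subtableau of $\ins(\mathbf{a}^*)$ on the boxes where $T$ has labels $>r_1$ does \emph{not} rectify to $\ins(\mathbf{a}_1)$ in general: take $\mathbf{a}=21$, so $\lambda=(1,1)$, $\mathbf{a}_r=2$ and $\mathbf{a}_1=1$; the box of $\ins(\mathbf{a}^*)$ carrying $T$-label $2$ contains the entry $2$ and rectifies to the one-box tableau with entry $2$, whereas $\ins(\mathbf{a}_1)=\ins(1)$ is the one-box tableau with entry $1$. The correct general fact is about the \emph{recording} tableau: $\shape(\ins(\mathbf{a}_1))$ equals the rectification shape of $T$ restricted to the labels $>r_1$. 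Since rectification shape depends on the filling and not just on the skew shape $\lambda/(r_1)$, you cannot silently substitute the insertion tableau for the recording tableau. Your conclusion does survive, because $T$ restricted to labels $>r_1$ is a one-row-down translate of a straight-shape standard tableau of shape $(r_2,\dots,r_m)$ and hence rectifies to shape $(r_2,\dots,r_m)$; but the cleaner repair is the paper's own (transposed) argument: $\ins(\mathbf{a})=\mathbf{a}_r\rightarrow\ins(\mathbf{a}_1)$, left-inserting the weakly increasing word $\mathbf{a}_r$ adds a horizontal strip of size $r_1=\lambda_1$, and the only $\mu\subseteq\lambda$ with $\lambda/\mu$ a horizontal strip of size $\lambda_1$ is $\mu=(r_2,\dots,r_m)$.
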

\begin{proof}
    This is similar to the proof of Lemma~$\ref{lem:spit-out-left}$.
\end{proof}
\begin{ex}
The following equivalences give examples for Lemma~\ref{lem:spit-out-right}:
\[\frac{\left(\begin{smallmatrix}
    1 & 1 & 2 & 3 \\
    2 & 3 & 4 &   \\
    3 &   &   &
\end{smallmatrix}\right)}{4} \sim 
\frac{\left(\begin{smallmatrix}
    1 & 3 & 3 & 4 
\end{smallmatrix}\right)}{4} \frac{\left(\begin{smallmatrix}
    1&2&3\\
    2& &
\end{smallmatrix}\right)}{3} \sim
\frac{\left(\begin{smallmatrix}
    1 & 3 & 3 & 4 
\end{smallmatrix}\right)}{4} \frac{\left(\begin{smallmatrix}
    2&2&3
\end{smallmatrix}\right)}{3}
\frac{\left(\begin{smallmatrix}
    1
\end{smallmatrix}\right)}{2}.
\]
\end{ex}

\begin{lemma}
    \label{lem:shortest-k-unique}
    Let $D\in\BPD(\pi)$, $k:=d_1(\pi)$ (namely $k$ is the smallest with $\maxcode(\pi)(k+1)\neq 0$), and $Q$ a biword with $\varphi(Q)=D$. If $Q=Q'\binom{\mathbf{a}}{k}$ where each $\binom{b_j}{k_j}$ in $Q'$ has $k_j>k$ and $\ell(\mathbf{a})=\maxcode(\pi)(k+1)$, then the $k$-biword $\binom{\mathbf{a}}{k}$ and the BPD $\varphi(Q')$ are unique.
\end{lemma}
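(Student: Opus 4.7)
The plan is to reconstruct both the $k$-biword $\binom{\mathbf{a}}{k}$ and the BPD $\varphi(Q')$ canonically from $D$ alone, by analyzing the $k$-Bruhat chain encoded in the suffix. First I would establish a structural consequence of the hypothesis on $Q'$: since every biletter in $Q'$ has label $\geq k+1$, the chain $ch_R(Q')$ consists of mixed $k'$-covers with $k' \geq k+1$, and an induction on length, using the no-intermediate condition of a Bruhat cover, shows that such chains starting from $\id$ preserve the property $d_1 \geq k+1$. Consequently $\pi' := \perm(\varphi(Q'))$ satisfies $\pi'(1) < \cdots < \pi'(k+1)$, and in particular $\maxcode(\pi')(k+1) = 0$.

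Next I would analyze the chain of $k$-covers $\pi' = \pi_0 \lessdot_k \pi_1 \lessdot_k \cdots \lessdot_k \pi_r = \pi$ arising from the insertion of $\binom{\mathbf{a}}{k}$, of length $r = \ell(\mathbf{a}) = \maxcode(\pi)(k+1)$. By parts (a) and (c) of Lemma~\ref{lem:code-maxcode-properties}, a cover via $t_{\alpha_j \beta_j}$ increases $\maxcode(\cdot)(k+1)$ by $m_{k+1} + 1 \geq 1$ when $\beta_j = k+1$ and leaves it unchanged otherwise. Since the net increase is $r$ over exactly $r$ covers, every cover must satisfy $\beta_j = k+1$ and $m_{k+1} = 0$; combined with the covering condition, this forces $\pi_{j-1}(\alpha_j)$ to be the largest value strictly less than $\pi_{j-1}(k+1)$ among positions $1, \ldots, k$. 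Reading this on $\pi_j$ instead, $\alpha_j$ is the unique position in $\{1, \ldots, k\}$ realizing $\min\{\pi_j(i) : i \leq k,\ \pi_j(i) > \pi_j(k+1)\}$. Hence each reverse-cover $\pi_j \to \pi_{j-1}$ is uniquely determined by $\pi_j$, and iterating from $\pi$ recovers the entire chain, and in particular $\pi'$, uniquely from $D$.

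To upgrade from uniqueness of the permutation chain to uniqueness of the BPD chain and of $\mathbf{a}$ as a word, I would invoke the simplified right insertion of a single $k$-biletter from \cite[Lemma 5.2]{HP}: right-inserting $\binom{a_j}{k}$ into $D_{j-1}$ follows a deterministic min-droop path that starts at an $\rt$ tile in row $a_j$ and terminates at the crossing of the two pipes swapped by $t_{\alpha_j, k+1}$. Since this endpoint in $D_j$ is pinned down by the (already determined) cover, the reverse operation of un-drooping backward from that tile is also deterministic and simultaneously recovers the starting row $a_j$ and the preceding BPD $D_{j-1}$; iterating $r$ times from $D = D_r$ delivers $\binom{\mathbf{a}}{k}$ and $\varphi(Q') = D_0$.

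The main obstacle, in my view, lies in this last step: one must verify that the min-droop path of the simplified $k$-biword right insertion is genuinely reversible in the regime where the cover is of the constrained type with $m_{k+1} = 0$, and that no residual ambiguity arises from distinct starting rows producing the same endpoint tile in $D_j$. This is where the technical bulk of the argument, involving the tile-level anatomy of BPDs and the precise form of \cite[Lemma 5.2]{HP}, will reside.
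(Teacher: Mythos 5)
Your strategy coincides with the paper's: show that $\pi':=\perm(\varphi(Q'))$ has vanishing $\maxcode$ up to position $k+1$, use Lemma~\ref{lem:code-maxcode-properties} to force every cover in the $k$-labelled segment to be $t_{\alpha,k+1}$ with a uniquely determined $\alpha\le k$, and then recover $\mathbf{a}$ and the intermediate BPDs by reverse insertion. There is, however, a genuine (if easily repaired) gap in the counting step. From ``each cover increases $\maxcode(\cdot)(k+1)$ by $m_{k+1}+1\ge 1$ when $\beta_j=k+1$ and by $0$ otherwise'' together with ``the net increase over $r$ covers is $r$,'' it does \emph{not} follow that every cover has $\beta_j=k+1$ and $m_{k+1}=0$: a priori one cover could contribute $2$ and another $0$, still summing to $r$. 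You need an upper bound of $1$ on each contribution, i.e.\ $m_{k+1}=0$ at every step, before the count forces all $\beta_j=k+1$ — but $m_{k+1}=0$ is part of what you are trying to conclude. The paper closes this by tracking $\maxcode(\pi_i)(j)$ for all $j\le k$ along the entire chain: these are $0$ after $Q'$ has been inserted, under a mixed $k$-cover they change only at position $\alpha$ and there by $-m_\beta\le 0$ (Lemma~\ref{lem:code-maxcode-properties}(a),(c)), and they are nonnegative, so they remain $0$ and force $m_\beta=0$ throughout. Your first paragraph establishes this vanishing only for $\pi'$ itself; you must propagate it through the $k$-cover segment before your counting argument becomes valid.

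Separately, the step you flag as the ``main obstacle'' — recovering $a_j$ and $D_{j-1}$ from $D_j$ and the determined cover, with no ambiguity in the starting row — is not where the difficulty lies. That is precisely the invertibility of single-biletter right insertion given the output BPD and the recorded cover, which is already part of the bijection established in \cite{HP}; the paper's final sentence (``this determines the recording chain \dots so $\binom{\mathbf{a}}{k}$ and $\varphi(Q')$ are unique'') invokes it as a black box, and no new tile-level analysis is required.
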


\begin{proof}
    We successively perform right insertion of $Q$ and keep track of the maxcode of the permutations. Let $Q_i$ denote the prefix of $Q$ of length $i$ and let $\pi_i:=\perm(\varphi(Q_i))$. Let $m:=\ell(Q')$. For each $0\le i < m$, $\pi_{i+1}=\pi_i t_{\alpha_i\beta_i}$ for some $\beta_i > k+1$. Therefore $\maxcode(\pi_m)(j)=0$ for any $j\le k+1$. For each $m\le i <\ell(\pi)$, $\pi_{i+1}=\pi_i t_{\alpha_i \beta_i}$ for some $\beta_i>k$. Therefore $\maxcode(\pi_i)(j)=0$ for all $m\le i \le \ell(\pi)$ and $j < k+1$. Therefore to achieve $\ell(\mathbf{a})=\maxcode(\pi)(k+1)$, it must be the case that for $m\le i < \ell(\pi)$, $\maxcode(\pi_i)$ and $\maxcode(\pi_{i+1})$ agrees everywhere except $\maxcode(\pi_{i+1})(k+1)=\maxcode(\pi_i)(k+1)+1$. It follows that for $m\le i<\ell(\pi)$, $\pi_{i}=\pi_{i+1}t_{\alpha_i,k+1}$ where $\alpha_i\le k$ is the smallest position such that $\pi_{i+1}(\alpha_i)>\pi_{i+1}(k+1)$ by Lemma~\ref{lem:code-maxcode-properties}.  This determines the recording chain from $\pi_m$ to $\pi$, so $\binom{\mathbf{a}}{k}$ and $\varphi(Q')$ are unique.
\end{proof}

\begin{lemma}
    \label{lem:kmax-unique}
    Let $D\in\BPD(\pi)$, $k$ largest with $\maxcode(\pi)(k+1)\neq 0$, and $Q$ a biword with $\varphi(Q)=D$. If $Q=\binom{\mathbf{a}}{k}Q'$ where each $\binom{b_j}{k_j}$ in $Q'$ has $k_j<k$ and $\ell(\mathbf{a})=\maxcode(\pi)(k+1)$, then the $k$-biword $\binom{\mathbf{a}}{k}$ and the BPD $\varphi(Q')$ are unique.
\end{lemma}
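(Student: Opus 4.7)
The plan is to mirror the proof of Lemma~\ref{lem:shortest-k-unique}: use left insertion in place of right insertion, read $Q$ right to left, and track $\maxcode$ at positions $\ge k+1$ in place of positions $\le k+1$. Let $Q_i$ denote the length-$i$ suffix of $Q$, set $\pi_i:=\perm(\varphi(Q_i))$, and put $m:=\ell(Q')$. Then the recording chain $\id=\pi_0 \lessdot \pi_1 \lessdot \cdots \lessdot \pi_{\ell(Q)}=\pi$ uses labels from $Q'$ (each $<k$) for the first $m$ steps and label $k$ for the last $\ell(\mathbf{a})$ steps, so writing $\pi_{i+1}=\pi_i t_{\alpha_i \beta_i}$ we have $\alpha_i\le k$ for every $i$.

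Since $k$ is the \emph{largest} index with $\maxcode(\pi)(k+1)\neq 0$, we have $\maxcode(\pi)(j)=0$ for all $j\ge k+2$. By Lemma~\ref{lem:code-maxcode-properties}, at such a position $j$ the value $\maxcode(\pi_i)(j)$ changes only when $\beta_i=j$ (since $\alpha_i\le k<j$), and each such change strictly increases it; since it starts and ends at $0$, this forces $\beta_i\le k+1$ throughout. The same analysis at position $k+1$ shows $\maxcode(\pi_i)(k+1)$ only increases, and only via $\beta_i=k+1$, by $m_\beta+1\ge 1$ each time. In the $\binom{\mathbf{a}}{k}$-phase $\beta_i\ge k+1$ and $\beta_i\le k+1$, so $\beta_i=k+1$ at every one of the $\ell(\mathbf{a})$ steps, already contributing at least $\ell(\mathbf{a})$ to the final value $\maxcode(\pi)(k+1)=\ell(\mathbf{a})$. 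Hence every $m_\beta$ vanishes in the $\binom{\mathbf{a}}{k}$-phase and no $Q'$-cover can have $\beta_i=k+1$; consequently every $Q'$-cover acts entirely within positions $\le k$, and $\pi_m$ fixes all positions $>k$.

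To finish, I would reverse the chain from $\pi$. For each $m\le i<\ell(Q)$, the Bruhat cover condition together with $m_\beta=0$ is equivalent, via Lemma~\ref{lem:code-maxcode-properties}, to the statement that $\pi_{i+1}(\alpha_i)$ is the smallest value among $\{\pi_{i+1}(1),\ldots,\pi_{i+1}(k)\}$ that exceeds $\pi_{i+1}(k+1)$. This characterization (the mirror of the one used in Lemma~\ref{lem:shortest-k-unique}, adapted to the fact that here it is positions $k+2,k+3,\ldots$ rather than positions $1,\ldots,k$ that carry an increasing sequence in $\pi_i$) pins down $\alpha_i$ and hence $\pi_i$ uniquely, so iteration recovers the full chain $\pi_m\lessdot_k\cdots\lessdot_k\pi$ and determines $\pi_m$. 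Because a single left-insertion step is invertible---given its output BPD together with the cover one recovers both the input BPD and the inserted biletter, as in \cite{HP}---the intermediate BPDs $\varphi(Q_m)=\varphi(Q'),\varphi(Q_{m+1}),\ldots,\varphi(Q_{\ell(Q)})=D$ and the biletters of $\binom{\mathbf{a}}{k}$ are all uniquely determined.

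The main obstacle is the bookkeeping that forces $\beta_i\le k+1$ throughout, since during $Q'$-processing $\beta_i$ could a priori be arbitrarily large. The monotonicity of $\maxcode$ at positions $\ge k+2$, which holds precisely because $\alpha_i\le k$ in every cover, is the crux; once it is in hand the rest of the argument follows the template of Lemma~\ref{lem:shortest-k-unique}.
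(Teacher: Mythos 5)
Your proposal is correct and follows essentially the same route as the paper's proof: track $\maxcode$ under left insertion of the suffixes, use the fact that $\alpha_i\le k$ in every cover so $\maxcode$ at positions $\ge k+1$ never decreases, force $\beta_i=k+1$ with $m_\beta=0$ on the last $\ell(\mathbf{a})$ steps, and reverse the resulting forced chain. Your explicit justification that $\beta_i\le k+1$ throughout and your value-based characterization of $\alpha_i$ are just slightly more detailed versions of steps the paper states tersely (its ``smallest position such that $\pi_{i+1}t_{\alpha_i,k+1}\lessdot\pi_{i+1}$'' is equivalent to your ``smallest value exceeding $\pi_{i+1}(k+1)$'').
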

\begin{proof}
    We successively perform left insertion of $Q$ and keep track of the maxcode of the permutations. 
    Let $Q_i$ denote the suffix of $Q$ of length $i$ and let $\pi_i:=\perm(\varphi(Q_i))$. Let $m:=\ell(Q')$. For each $0\le i <\ell(\mathbf{a})$, we have $\pi_{m+i+1}=\pi_{m+i}t_{\alpha_{m+i}\beta_{m+i}}$ where $\beta_{m+i}>k$. Namely, the insertion could only increase the maxcode of the permutation at positions larger than $k$. Since each $\binom{b_j}{k_j}$ in $Q'$ has $k_j<k$, the insertion of each biletter in $Q'$ cannot decrease the maxcode of the permutation at positions larger than $k$. Therefore to achieve $\ell(\mathbf{a})=\maxcode(\pi)(k+1))$ it must be the case that for $0\le i\le m$, $\maxcode(\pi_i)(k+1)=0$ and for $m\le i < \ell(\pi)$, $\maxcode(\pi_i)$ and $\maxcode(\pi_{i+1})$ agree everywhere except $\maxcode(\pi_{i+1})(k+1)=\maxcode(\pi_{i})(k+1)+1$. It follows that for $m\le i<\ell(\pi)$, $\pi_{i}=\pi_{i+1}t_{\alpha_i,k+1}$ where $\alpha_i\le k$ is the smallest position such that $\pi_{i+1}t_{\alpha_i,k+1}\lessdot \pi_{i+1}$ by Lemma~\ref{lem:code-maxcode-properties}. This determines the recording chain from $\pi_m$ to $\pi$ so $\binom{\mathbf{a}}{k}$ and $\varphi(Q')$ are unique.
\end{proof}

\begin{prop}
For any $D\in\BPD(\pi)$ there exists a unique plactic biword $Q_D^{\max}=\binom{a_1,\cdots, a_\ell}{k_1,\cdots, k_\ell}$ such that $\varphi(Q^{\max}_D)=D$ and the number of occurrence of $i$ in $(k_1,\cdots,k_\ell)$ is $\maxcode(\pi)(i+1)$. Define $Q^{\max}_D$ to be the \textbf{maxword} for $D$.
\end{prop}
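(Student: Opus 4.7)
The plan is to proceed by induction on $\ell(\pi)$, establishing uniqueness via Lemma~\ref{lem:kmax-unique} and existence by running the uniqueness argument in reverse through reverse left insertion.

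In the base case $\pi = \id$, $D$ is the identity BPD and the empty biword is trivially the unique maxword. For the inductive step, I would let $k$ be the largest index with $\maxcode(\pi)(k+1) \neq 0$ and set $r := \maxcode(\pi)(k+1) > 0$.

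For uniqueness, I would observe that any maxword $Q$ for $D$ is plactic with largest label $k$, so it factors as $Q = \binom{\mathbf{a}}{k} Q'$ where $\binom{\mathbf{a}}{k}$ is a $k$-biword of length $r$ and $Q'$ has labels strictly below $k$; by Lemma~\ref{lem:kmax-unique}, both $\binom{\mathbf{a}}{k}$ and $D' := \varphi(Q')$ are uniquely determined by $D$. Using Lemma~\ref{lem:code-maxcode-properties}(c) with the minimality of the $\alpha_i$'s in the proof of Lemma~\ref{lem:kmax-unique} (which forces $m_\beta = 0$ at each step), the permutation $\pi' := \perm(D')$ satisfies $\maxcode(\pi')(j) = \maxcode(\pi)(j)$ for $j \leq k$ and $\maxcode(\pi')(j) = 0$ for $j > k$; so $Q'$ has precisely the label multiplicities required of a maxword for $D'$, and the inductive hypothesis forces $Q' = Q^{\max}_{D'}$.

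For existence, I would build the chain $\pi = \pi_\ell \gtrdot \pi_{\ell-1} \gtrdot \cdots \gtrdot \pi_{\ell-r} =: \pi'$ by $\pi_{i-1} = \pi_i \cdot t_{\alpha_{i-1}, k+1}$ with $\alpha_{i-1}$ the smallest position yielding a Bruhat cover; this is well-defined at each step because $\maxcode(\pi_i)(k+1) > 0$ ensures some valid $\alpha_{i-1} \leq k$ exists, e.g.\ the position with smallest $\pi_i$-value exceeding $\pi_i(k+1)$, which is readily seen to coincide with the smallest such position admitting a cover. Performing reverse left insertion on $D$ along this chain — well-defined by the bijectivity of the insertion algorithm in \cite{HP}, since each decorated covering edge corresponds to a unique insertion step — yields a BPD $D' \in \BPD(\pi')$ and a $k$-biword $\binom{\mathbf{a}}{k}$ of length $r$ such that $\binom{\mathbf{a}}{k}$ left-inserts into $D'$ to give $D$. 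The inductive hypothesis applied to $D'$ provides $Q^{\max}_{D'}$, and the concatenation $Q^{\max}_D := \binom{\mathbf{a}}{k} \cdot Q^{\max}_{D'}$ is the desired maxword.

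The hard part will be the existence step, specifically verifying that reverse left insertion along the prescribed chain is truly well-defined and produces a valid BPD at each stage. This rests on the bijective nature of the insertion algorithm of \cite{HP}, which guarantees that a BPD together with a decorated Bruhat covering edge uniquely determines the predecessor BPD and the biletter being inserted. One must then check that the chain constructed purely from permutation data is a genuine saturated chain in mixed $k$-Bruhat order, which follows from the covering verifications above.
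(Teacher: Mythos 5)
Your argument is correct, but it runs the induction from the opposite end of the biword. The paper's own proof is the one-liner ``this follows by induction using Lemma~\ref{lem:shortest-k-unique}'': it strips off the \emph{suffix} $\binom{\mathbf{a}}{k}$ carrying the \emph{smallest} label $k=d_1(\pi)$ (of multiplicity $\maxcode(\pi)(k+1)$), leaving a prefix whose labels all exceed $k$, and recurses on the unique BPD $\varphi(Q')$. You instead strip off the \emph{prefix} carrying the \emph{largest} label, invoking Lemma~\ref{lem:kmax-unique} in place of Lemma~\ref{lem:shortest-k-unique}. By the evident left/right symmetry of the two lemmas the routes are interchangeable: in either case the proof of the relevant lemma shows the stripped chain changes $\maxcode$ only at position $k+1$, so the remaining factor has exactly the label multiplicities of a maxword for the smaller BPD and the inductive hypothesis applies. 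Your write-up also makes the existence half explicit --- building the forced saturated chain by taking at each step the smallest admissible $\alpha$ (equivalently, the position whose value is the smallest one exceeding $\pi_i(k+1)$, which forces $m_\beta=0$ in Lemma~\ref{lem:code-maxcode-properties}(c)) and pulling it back through $\mathcal{L}^{-1}$ --- a step the paper leaves implicit here but carries out in the same way for the analogous minword proposition. I see no gaps; the only cost of your choice is that the paper's Lemma~\ref{lem:kmax-unique} is phrased for left insertion of a prefix, so one must keep track, as you do, that the first $r$ biletters of a plactic maxword correspond to the top $r$ edges of the left recording chain.
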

\begin{proof}
This follows by induction using Lemma~\ref{lem:shortest-k-unique}.
\end{proof}

\begin{ex}
\label{ex:maxword}
    Let $\pi=13574862$ and $D\in\BPD(\pi)$ be the BPD below. 
    \begin{center}
        \includegraphics[scale=0.7]{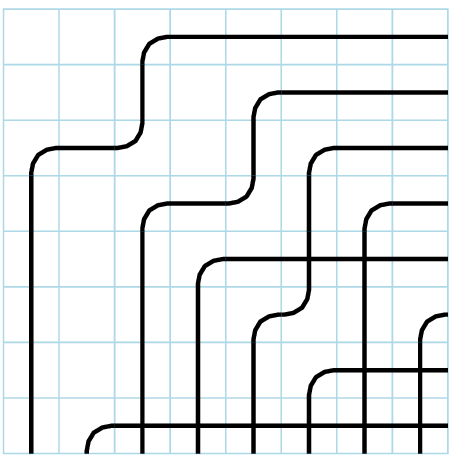}
    \end{center}
    Then \[Q_D^{\max}=\left(\begin{smallmatrix}
        7&6&5&4&2&1&2&1&3&2 \\
        7&7&7&7&7&7&6&6&4&4
    \end{smallmatrix}\right).\] 
Its recording chain for right insertion is $(12345678\lessdot_7 12345687 \lessdot_7 12345786\lessdot_7 12346875 \lessdot_7 12356784 \lessdot_7 12456783 \lessdot_7 13456782 \lessdot_6 13456872 \lessdot_6 13457862 \lessdot_4 13475862 \lessdot_4 13574862).$
\end{ex}

\begin{lemma}
\label{lem:grass-absorb}
Suppose $D\in\BPD(\pi)$ where $\pi$ is a Grassmannian permutation with descent at $k$, and let $Q=\binom{\mathbf{a}}{k}$ be a $k$-biword for $D$. Then if $\pi':=\perm(D\leftarrow \binom{a}{m})$ with $m<k$ stays a Grassmannian permutation, $Q\binom{a}{m}\sim Q'$ for any $k$-biword $Q'$ of $D\leftarrow\binom{a}{m}$.
\end{lemma}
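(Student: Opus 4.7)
The plan is to reduce to Lemma~\ref{lem:right-absorb} via induction on $k - m \ge 1$, after a Bruhat-theoretic preliminary. Writing $\pi' = \pi\,t_{\alpha\beta}$ for the $m$-Bruhat cover with $\alpha \le m < \beta$, the case $\beta \le k$ would produce a new descent in $[1, k - 1]$ (since $\pi$ is increasing there), contradicting Grassmannianness of $\pi'$. Hence $\beta > k$, so the same cover is simultaneously a $k$-Bruhat cover and $\pi'$ is in fact Grassmannian with descent at $k$. Consequently there is a biletter $\binom{a''}{k}$ with $D \leftarrow \binom{a''}{k} = D'$, and $Q\binom{a''}{k}$ is itself a $k$-biword for $D'$. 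Since all $k$-biwords inserting to a fixed Grassmannian BPD share a common insertion SSYT, classical Knuth moves (relations (1) and (2)) connect $Q\binom{a''}{k}$ to any $k$-biword $Q'$ of $D'$; the task therefore reduces to exhibiting \emph{some} $k$-biword $Q''$ with $Q\binom{a}{m} \sim Q''$.

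For the base case $k - m = 1$, I would apply Lemma~\ref{lem:right-absorb} with its ambient parameter set so that the lemma's ``$k$'' is our $m = k - 1$. The absorb condition---that $\varphi(Q)\leftarrow \binom{a}{k-1}$ create no descent at $k - 1$---is immediate because $\pi'$ has its unique descent at $k$, yielding the desired $k$-biword $Q''$. For the inductive step $k - m \ge 2$, I would use Lemma~\ref{lem:spit-out-right} to decompose $Q \sim R\,\bar Q$, where $R$ is a row-shaped $k$-biword (capturing the top row of $\ins(\mathbf{a})$) and $\bar Q$ is a $(k-1)$-biword, so $Q\binom{a}{m} \sim R\,\bar Q\binom{a}{m}$. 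If $\varphi(\bar Q)\leftarrow \binom{a}{m}$ remains Grassmannian, the inductive hypothesis applied to $\bar Q$ (with strictly smaller parameter $(k-1) - m$) gives $\bar Q\binom{a}{m} \sim \bar Q'$ for some $(k-1)$-biword $\bar Q'$, and one then fuses $R\,\bar Q'$ back into a $k$-biword via a converse of Lemma~\ref{lem:spit-out-right}.

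The main obstacle will be verifying that $\varphi(\bar Q) \leftarrow \binom{a}{m}$ is indeed Grassmannian (the hypothesis only guarantees this for $\varphi(Q)\leftarrow\binom{a}{m}$) together with handling the case where the new box in $\lambda' = \lambda + (\text{box})$ lands in the top row of $\lambda$. In that latter scenario $R$ itself must absorb the biletter, and one would instead appeal to the dual decomposition via Lemma~\ref{lem:spit-out-left} and Lemma~\ref{lem:left-absorb}, spitting off the first column and absorbing on its far right. Both sub-tasks ultimately amount to tracking a single bump chain through the Grassmannian-to-SSYT dictionary, where classical facts about row (or column) insertion make the required claims transparent even when the direct BPD-side bookkeeping is delicate.
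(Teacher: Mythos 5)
Your overall strategy coincides with the paper's: induction on $k-m$, base case from Lemma~\ref{lem:right-absorb}, and inductive step via the decomposition $Q\sim R\,Q_1$ of Lemma~\ref{lem:spit-out-right}, with $R$ a single-row $k$-biword and $Q_1$ a $(k-1)$-biword. However, the step you yourself flag as ``the main obstacle'' --- showing that $\rho:=\perm(\varphi(Q_1)\leftarrow\binom{a}{m})$ is again Grassmannian, with its descent at $k-1$, so that the inductive hypothesis applies to $Q_1$ --- is precisely where the substance of the paper's proof lies, and you do not supply an argument for it; ``tracking a single bump chain through the Grassmannian-to-SSYT dictionary'' is a promissory note, not a proof. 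The paper closes this gap by tracking the Lehmer code: writing $\pi'=\pi t_{\alpha\beta}$ with $\alpha\le m<k<\beta$, Lemma~\ref{lem:code-maxcode-properties} (together with the fact that $\pi$ is increasing beyond position $k$, which forces the relevant count $m_\alpha$ to vanish) shows that $\code(\pi')$ differs from $\code(\pi)$ only at position $\alpha$, so in particular $\code(\pi')(k)=\code(\pi)(k)=r_1=\ell(R)$. When left-inserting $R\,Q_1\binom{a}{m}$, only the $r_1$ biletters of $R$ can increase the code at position $k$, hence each must do so by exactly $1$; consequently every change of the code at positions $\le k-1$ occurs during the insertion of $Q_1\binom{a}{m}$, which forces $\code(\rho)$ to agree with $\code(\pi')$ on positions $\le k-1$ and vanish above, i.e.\ $\rho$ is $(k-1)$-Grassmannian as required.

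The same computation disposes of the subcase you propose to handle separately via Lemma~\ref{lem:spit-out-left}: since the cover transposition has $\alpha\le m<k$, the value $\code(\pi')(k)$, which is the top row length of the partition, is unchanged, so the new box never lands in the top row and $R$ never has to absorb the biletter. That branch of your argument is vacuous rather than wrong, but its presence signals that the decisive verification was not carried out. The remaining pieces of your outline --- the base case, the reassembly of $R$ with the updated $(k-1)$-biword followed by Lemma~\ref{lem:right-absorb} to return to a $k$-biword, and the reduction from ``some $k$-biword'' to ``any $k$-biword'' via classical Knuth equivalence of words with the same insertion tableau --- are correct and match the paper's chain $Q\binom{a}{m}\sim RQ_1\binom{a}{m}\sim RQ_1\binom{a}{k-1}\sim Q\binom{a}{k-1}\sim Q\binom{a}{k}$. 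As written, though, the proposal defers exactly the one step that requires work.
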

\begin{proof}
We proceed by induction on $k-m$. The case when $m=k-1$ follows from Lemma~\ref{lem:right-absorb}. Now suppose $k-m>1$. Let $\lambda:=\shape(\ins(\mathbf{a}))$. By Lemma~\ref{lem:spit-out-right}, $Q\sim RQ_1$ where $R=\binom{\mathbf{a}_r}{k}$ is a length-$r_1$ $k$-biword, $\mathbf{a}_r$ weakly increasing, and $Q_1$ is a $(k-1)$-biword. Notice that $\code(\pi)(k)=r_1$, by the correspondence between partitions and Grassmannian permutations. 

We now argue that $\rho:=\perm(\varphi(Q_1)\leftarrow \binom{a}{m})$ must only have a descent at $k-1$. Since $\pi'$ is $k$-Grassmannian, $\pi'=\pi t_{\alpha\beta}$ where $\alpha\le m < k <\beta$. Therefore $\code(\pi')$ and $\code(\pi)$ agree everywhere except at $\alpha$ where $\code(\pi')(\alpha)=\code(\pi)(\alpha)+1$. In particular, $\code(\pi')(k)=r_1$. Consider left inserting $RQ_1\binom{a}{m}$ and keep track of the code of the permutation after the insertion of each biletter. The only times that the code at $k$ could increase are during the insertion of $R$, and since the permutations do not have descents $>k$ inserting each biletter in $R$ increases the code at $k$ by 1. Therefore, $\code(\rho)$ and $\code(\pi')$ agree at positions $\le k-1$, meaning that $\rho$ only has a descent at $k-1$. By induction hypothesis, $Q_1\binom{a}{m}\sim Q_1\binom{a}{k-1}$, since $\varphi(Q_1\binom{a}{m})=\varphi(Q_1\binom{a}{k-1})$. Therefore $Q\binom{a}{m}\sim RQ_1\binom{a}{m}\sim RQ_1\binom{a}{k-1}\sim Q\binom{a}{k-1}\sim Q\binom{a}{k}$ and the last step follows from Lemma~\ref{lem:right-absorb}.
\end{proof}

\begin{ex}
The following example illustrates Lemma~\ref{lem:grass-absorb}, with $Q=\binom{\mathbf{a}}{4}$ where $\mathbf{a}$ is any word in the Knuth class of the word $32341123$, $D=\varphi(Q)$, and $\binom{a}{m}=\binom{2}{2}$.
  \[\frac{\left(\begin{smallmatrix}
    1 & 1 & 2 & 3 \\
    2 & 3 & 4 &   \\
    3 &   &   &
\end{smallmatrix}\right)}{4} \frac{\left(\begin{smallmatrix}
    2
\end{smallmatrix}\right)}{2} \sim
\frac{\left(\begin{smallmatrix}
    1 & 3 & 3 & 4 
\end{smallmatrix}\right)}{4} \frac{\left(\begin{smallmatrix}
    1&2&3\\
    2& &
\end{smallmatrix}\right)}{3}
\frac{\left(\begin{smallmatrix}
    2
\end{smallmatrix}\right)}{2}\sim 
\frac{\left(\begin{smallmatrix}
    1 & 3 & 3 & 4 
\end{smallmatrix}\right)}{4} \frac{\left(\begin{smallmatrix}
    1&2&2\\
    2&3&
\end{smallmatrix}\right)}{3} \sim
\frac{\left(\begin{smallmatrix}
    1 & 1 & 2 & 2 \\
    2 & 3 & 3 &   \\
    3 & 4 &   &
\end{smallmatrix}\right)}{4}.
\]  
\end{ex}

\begin{defn}
    Let $\pi$ be a permutation where $d_1(\pi)=k$. Define $h(\pi)$ to be the permutation with $\code(h(\pi))(i)=\code(\pi(i))$ for $i\le k$  and $\code(h(\pi))=0$ for $i>k$. 
\end{defn}

\begin{ex}
    Let $\pi=13574862$, then $h(\pi)=13572468$.
\end{ex}

%\begin{defn}
%    Define $\delta(\pi):=\pi t_{\alpha\beta}$ where $\alpha$ is the smallest position with $\pi(\alpha)\neq \alpha$ and $\pi(\beta)=\alpha-1$.  
%\end{defn}

\begin{lemma}
    \label{lem:most-k-grass}
    Suppose $D\in\BPD(\pi)$, $\varphi(Q_1Q_2)=D$ where $Q_2$ is a $k$-biword with $\ell(Q_2)=\ell(h(\pi))$. Then each biletter $\binom{a_j}{k_j}$ in $Q_1$ has $k_j>k$ and $\varphi(Q_2)\in\BPD(h(\pi))$.
\end{lemma}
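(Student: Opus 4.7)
The plan is to track the statistic $N(\tau) := \sum_{i \le k} \code(\tau)(i)$ as the left insertion of $Q_1 Q_2$ proceeds from right to left, and to use Lemma~\ref{lem:code-maxcode-properties} to show that the insertions coming from $Q_1$ cannot change $N$. First I would check that $h(\pi)$ is Grassmannian with descent at $k$: because $k = d_1(\pi)$ is the first descent, $\pi(1) < \cdots < \pi(k)$ and so $\code(\pi)(1) \le \cdots \le \code(\pi)(k)$, giving $\ell(h(\pi)) = N(\pi)$.

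Next, setting $\sigma := \perm(\varphi(Q_2))$, since $Q_2$ is a $k$-biword and each biletter insertion is a cover in the $k$-Bruhat order, $\sigma$ is Grassmannian with descent at $k$ (or the identity, which forces $\pi$ to be trivial and the claim vacuous) and has length $\ell(Q_2) = \ell(h(\pi))$. Hence $N(\sigma) = \ell(\sigma) = \ell(h(\pi)) = N(\pi)$, so the insertions from $Q_1$ must collectively leave $N$ unchanged.

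Now I would apply Lemma~\ref{lem:code-maxcode-properties} biletter-by-biletter in $Q_1$. Because $Q_1 Q_2$ is plactic, every $\binom{a_j}{k_j}$ in $Q_1$ satisfies $k_j \ge k$, and its insertion multiplies the current permutation by some $t_{\alpha\beta}$ with $\alpha \le k_j < \beta$, so $\beta > k$ automatically. By the lemma the code changes only at $\alpha$ (by $+m_\alpha + 1$) and at $\beta$ (by $-m_\alpha$), so the contribution to $N$ is $m_\alpha + 1 \ge 1$ if $\alpha \le k$ and $0$ if $\alpha > k$. Nonnegativity together with vanishing total forces $\alpha > k$ for every biletter in $Q_1$, and hence $k_j \ge \alpha > k$, which is the first claim. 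Moreover, with $\alpha, \beta > k$ throughout the $Q_1$ phase, $\code(\cdot)(i)$ is untouched for every $i \le k$, giving $\code(\sigma)(i) = \code(\pi)(i)$ for $i \le k$; combined with $\code(\sigma)(i) = 0$ for $i > k$ (since $\sigma$ is Grassmannian with descent at $k$), this yields $\sigma = h(\pi)$, i.e.\ $\varphi(Q_2) \in \BPD(h(\pi))$. The argument is essentially a conservation-law bookkeeping, and the only subtlety is confirming that each $k$-biword insertion contributes exactly one to the length of the permutation, which follows from the simplified insertion rules for $k$-biwords recorded in \cite[Lemma 5.1]{HP}.
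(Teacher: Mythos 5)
Your proof is correct and follows essentially the same strategy as the paper's: both track $\sum_{i\le k}\code(\cdot)(i)$ along the left insertion of $Q_1Q_2$ via Lemma~\ref{lem:code-maxcode-properties}, note that the $Q_2$-phase already accounts for the full total $\ell(h(\pi))$ while each $Q_1$-insertion (having $k_j\ge k$, hence $\beta>k$) contributes nonnegatively, and conclude by a conservation/counting argument. The only cosmetic difference is that you pin down the intermediate value $N(\perm(\varphi(Q_2)))$ using that this permutation is $k$-Grassmannian, whereas the paper obtains the same count directly from each $k$-cover strictly increasing the code at some $\alpha\le k$.
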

\begin{proof}
    Consider successively left inserting $Q_2$ into the identity BPD. Each step of insertion causes the permutation to change by right multiplication by $t_{\alpha\beta}$ for some $\alpha\le k <\beta$. Therefore, the code of the permutation strictly increases at $\alpha$ and otherwise remains the same.  Now since each biletter $\binom{a_j}{k_j}$ in $Q_1$ has $k_j\ge k$, when successively left inserting each biletter in $Q_1$, the code of the permutation for each position $i\le k$ is non-decreasing. Since $\ell(Q_2)=\ell(h(\pi))$ and $\code(h(\pi))$ agrees with $\code(\pi)$ for positions $i\le k$, the only possibility is for the code to increase by 1 at some $\alpha\le k$ when left inserting a biletter from $Q_2$ and remains the same for all $i\le k$ when inserting biletters from $Q_1$. The statements then follow.
    %Suppose to the contrary that $Q_1=Q_1'\binom{a}{k}$. Then $\perm(\varphi(\binom{a}{k}Q_2))$ is a $k$-Grassmannian permutation with length larger than $\ell(h(\pi))$. Now notice that by successively left inserting the biletters in $Q_1'$ into $\varphi(\binom{a}{k}Q_2)$, the code of the permutation for each position $i\le k$ is non-decreasing. This contradicts that 
\end{proof}

\begin{defn}
    Let $\pi$ be a permutation, and suppose $\rho<\pi$ is a permutation such that there exists $\alpha\ge 1$ where $\code(\pi)(i)=\code(\rho)(i)$ for $i<\alpha$, $\code(\rho)(\alpha)<\code(\pi)(\alpha)$, and $\code(\rho)(i)=0$ for all $i>\alpha$. Notice that $\alpha$ is unique when exists. Define $\delta(\rho):=\rho t_{\alpha\beta}$ where $\beta>\alpha$ is the smallest position with $\pi(\beta)>\pi(\alpha)$. 
\end{defn}

\begin{prop}
For any $D\in\BPD(\pi)$ there exists a unique plactic biword $Q_D^{\min}=\binom{a_\ell,\cdots, a_1}{k_\ell,\cdots, k_1}$ such that $\varphi(Q^{\min}_D)=D$ and the number of occurrence of $i$ in $(k_\ell,\cdots,k_1)$ is $\code(\pi)(i)$. Define $Q^{\min}_D$ to be the \textbf{minword} for $D$.
\end{prop}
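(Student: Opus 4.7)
I follow the structure of the maxword proposition's proof, inducting on $\ell(\pi)$ with a $\code$-analogue of Lemma~\ref{lem:kmax-unique} in place of Lemma~\ref{lem:shortest-k-unique}.

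For the base case $\pi = \id$, the empty biword is the unique candidate. For the inductive step, set $k^{*} := \max\{i : \code(\pi)(i) \neq 0\}$. In any plactic biword $Q$ with label multiset equal to $\code(\pi)$, the labels are weakly decreasing from left to right, so the leftmost $\code(\pi)(k^{*})$ biletters must all carry label $k^{*}$. Write $Q = \binom{\mathbf{a}}{k^{*}} Q'$ with $\ell(\mathbf{a}) = \code(\pi)(k^{*})$ and every label in $Q'$ strictly less than $k^{*}$.

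The key step is the $\code$-version of Lemma~\ref{lem:kmax-unique}: given $D \in \BPD(\pi)$ and a plactic biword $Q = \binom{\mathbf{a}}{k^{*}} Q'$ as above with $\varphi(Q) = D$, both $\binom{\mathbf{a}}{k^{*}}$ and $\varphi(Q')$ are uniquely determined. I prove this by tracking $\code(\cdot)(k^{*})$ during left insertion of $Q$ from right to left, mirroring the proof of Lemma~\ref{lem:kmax-unique} but for $\code$ instead of $\maxcode$. Each biletter of $Q'$ produces a Bruhat cover $\rho \lessdot \rho t_{\alpha\beta}$ with $\alpha \le k_j < k^{*}$, so by Lemma~\ref{lem:code-maxcode-properties}(a) the value $\code(\cdot)(k^{*})$ can change only when $\beta = k^{*}$; under the invariant $\code(\rho)(k^{*}) = 0$, every value $\rho(i)$ at a position $i > k^{*}$ exceeds $\rho(k^{*})$, forcing the count $m_\alpha$ in Lemma~\ref{lem:code-maxcode-properties}(b) to vanish, so $\code(\cdot)(k^{*})$ stays at zero throughout the insertion of $Q'$. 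The subsequent $k^{*}$-biletters in $\binom{\mathbf{a}}{k^{*}}$ must then supply exactly $\code(\pi)(k^{*})$ units of code at position $k^{*}$. Each such cover $\rho \lessdot \rho t_{\alpha\beta}$ with $\alpha \le k^{*} < \beta$ is forced by Lemma~\ref{lem:code-maxcode-properties}(b) to have $\alpha = k^{*}$ and $m_\alpha = 0$ in order to produce exactly one unit of increase, and $\beta$ is then uniquely determined as the position whose value realizes the minimal gap above $\rho(k^{*})$. This pins down the chain from $\perm(\varphi(Q'))$ to $\pi$ step-by-step, so both $\binom{\mathbf{a}}{k^{*}}$ and $\varphi(Q')$ are determined by $D$.

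With $\binom{\mathbf{a}}{k^{*}}$ and $\varphi(Q') \in \BPD(\pi')$ uniquely identified, where $\pi'$ has code agreeing with $\pi$ everywhere except at $k^{*}$ (where it is zero), the inductive hypothesis supplies the unique minword of $\varphi(Q')$, and concatenation produces $Q_D^{\min}$. Existence follows by running the same induction forwards from $D$: reverse-left-insertion along the forced chain extracts the canonical $\binom{\mathbf{a}}{k^{*}}$ and $D' \in \BPD(\pi')$, and the induction supplies the minword of $D'$.

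\emph{Main obstacle.} The main technical hurdle is proving the $\code$-version of Lemma~\ref{lem:kmax-unique}. In contrast with the $\maxcode$ setting, where labels strictly less than $k$ are structurally unable to affect $\maxcode(\cdot)(k+1)$, labels strictly less than $k^{*}$ in the $\code$ setting can in principle modify $\code(\cdot)(k^{*})$ through covers with $\beta = k^{*}$. Maintaining the invariant $\code(\rho)(k^{*}) = 0$ throughout the insertion of $Q'$ is therefore the crux: it forces $m_\alpha = 0$ in Lemma~\ref{lem:code-maxcode-properties}(b) whenever $\beta = k^{*}$, and this in turn lets the invariant propagate to the next step.
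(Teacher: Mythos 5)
Your argument is correct and is essentially the paper's own proof in a different packaging: the forced chain you extract (each cover must have $\alpha$ equal to the label and $m_\alpha=0$, with $\beta$ pinned down as the minimal available gap) is exactly the paper's $\delta$-chain, and your uniqueness mechanism is the paper's one-line observation that inserting $\binom{a}{k}$ can only raise the code at a position $\alpha\le k$. The only difference is organizational — you induct by peeling off the top-label block (mirroring Lemma~\ref{lem:kmax-unique}) where the paper constructs the whole chain at once via $\delta$ and applies $\mathcal{L}^{-1}$ — so no further comparison is needed.
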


\begin{proof}
    Let $\mathbf{ch}:=(\id=\pi_0\lessdot_{k_1}\cdots \lessdot_{k_\ell} \pi_\ell=\pi)$ where $\pi_i=\delta^i(\id)$ and $k_i=\alpha_i$ where $\pi_{i+1}=\pi_{i}t_{\alpha_i\beta_i}$ and $Q_D^{\min}:=\mathcal{L}^{-1}(D,\mathbf{ch})$. By definition of $\delta$, the number of occurrence of $i$ in $(k_\ell,\cdots, k_1)$ is $\code(\pi)(i)$. Uniqueness follows from the fact that the insertion of a biletter $\binom{a}{k}$ increases the code of the permutation at a position $\alpha\le k$. 
\end{proof}

\begin{ex}
    For $D\in\BPD(\pi)$ in Example~\ref{ex:maxword}, 
    \[Q^{\min}_D=\left(\begin{smallmatrix}
        7&2&6&5&1&2&4&1&3&2 \\
        7&6&6&5&4&4&4&3&3&2
    \end{smallmatrix}\right).\]
    Its recording chain for left insertion is 
    $12345678\lessdot_2 13245678 \lessdot_3 13425678 \lessdot_3 13524678 \lessdot_4 13542678 \lessdot_4 13562478 \lessdot_4 13572468 \lessdot_5 13574268 \lessdot_6 13574628 \lessdot_6 13574826 \lessdot_7 13574862$. 
\end{ex}

\section{Proof of the main theorem}

\label{sec:main-proof}
We prepare a few more technical lemmas that will be important for the main proof.
\begin{lemma}
    \label{lem:add-bottom-strip}
    Let $\mathbf{a}$ be a word and $\lambda:=\shape(\ins(\mathbf{a}))$ with $\lambda^t=(c_1,\cdots,c_m)$ where $c_1\ge \cdots \ge c_m$. Suppose $\mathbf{b}=b_l\cdots b_1$ for some $1\le l<m$ and let $\mathbf{b}_i=b_i\cdots b_1$ for each $1\le i\le l$. Let $Q_i:=\binom{\mathbf{b}_i}{k+1}$ and $Q:=\binom{\mathbf{a}}{k}$, and $\pi_i:=\perm(\varphi(Q_iQ))$. If for all $1\le i\le l$, $\code(\pi_i)(k+1)=i$, then $\shape(\ins(\mathbf{b}_i\mathbf{a}))^t=(c_1+1,\cdots, c_i+1, c_{i+1},\cdots, c_m)$ for all $1\le i\le l$.
\end{lemma}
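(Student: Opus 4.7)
The plan is to prove the claim by induction on $i$, with the base case $i=0$ being immediate from the defining hypothesis $\shape(\ins(\mathbf{a}))^t=(c_1,\ldots,c_m)$. For the inductive step, set $\tau_{i-1}:=\ins(\mathbf{b}_{i-1}\mathbf{a})$, which by the inductive hypothesis has column shape $(c_1+1,\ldots,c_{i-1}+1,c_i,\ldots,c_m)$. Since $\mathbf{b}_i\mathbf{a}=b_i\cdot\mathbf{b}_{i-1}\mathbf{a}$, it suffices to show that left-inserting $b_i$ into $\tau_{i-1}$ puts its new box in column $i$.

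The first step is to translate the code hypothesis into a rigid permutation condition. The transition $\pi_{i-1}\to\pi_i$ comes from left-inserting $\binom{b_i}{k+1}$, hence is a cover $\pi_i=\pi_{i-1}t_{\alpha,\beta}$ with $\alpha\le k+1<\beta$ in $(k+1)$-Bruhat order. Since $\code(\pi_i)(k+1)-\code(\pi_{i-1})(k+1)=1$, Lemma~\ref{lem:code-maxcode-properties} forces $\alpha=k+1$ and $m_{k+1}=0$ in the notation of that lemma.

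The second step is to decompose $Q$ via Lemma~\ref{lem:spit-out-left}, writing $Q\sim\binom{\mathbf{a}^{(1)}}{k+1}C_1$ where $C_1=\binom{\mathbf{a}_c}{k}$ is a length-$c_1$ column and $\binom{\mathbf{a}^{(1)}}{k+1}$ has column shape $(c_2,\ldots,c_m)$. Then $Q_iQ\sim\binom{\mathbf{b}_i\mathbf{a}^{(1)}}{k+1}C_1$, so $\varphi(Q_iQ)$ is formed by first producing the Grassmannian BPD of partition $\tilde{\mu}^{(i)}:=\shape(\ins(\mathbf{b}_i\mathbf{a}^{(1)}))$ and then right-inserting $C_1$. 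A secondary induction on the number of columns $m$, applied to $(\mathbf{a}^{(1)},k+1,m-1)$, should give $(\tilde{\mu}^{(i)})^t=(c_2+1,\ldots,c_{i+1}+1,c_{i+2},\ldots,c_m)$. Finally, the identity $\ins(\mathbf{b}_i\mathbf{a})=\ins(\mathbf{b}_i\mathbf{a}^{(1)})\leftarrow\mathbf{a}_c$ combined with Pieri's rule (right-inserting the strictly decreasing column word $\mathbf{a}_c$ adds a vertical strip of size $c_1$) yields the claimed column shape $(c_1+1,\ldots,c_i+1,c_{i+1},\ldots,c_m)$.

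The main obstacle I anticipate is coordinating the two inductions. The given hypothesis is on $\pi_j=\perm(\varphi(Q_jQ))$, whereas the secondary induction needs a corresponding condition on $\perm(\varphi(\binom{\mathbf{b}_j}{k+2}\binom{\mathbf{a}^{(1)}}{k+1}))$ at position $k+2$, for a structurally different biword. Bridging this gap requires tracking how the $c_1$ right-insertions composing $C_1$ perturb codes through $k$-Bruhat covers $\sigma\lessdot_k\sigma t_{\alpha',\beta'}$, where by Lemma~\ref{lem:code-maxcode-properties} each such cover alters $\code$ only at $\alpha'\le k$ and $\beta'>k$. Additionally, one must verify that the Pieri vertical strip arising from right-inserting $\mathbf{a}_c$ is exactly the one dictated by the target shape; here the explicit form of $\mathbf{a}_c$ as the column word of the first column of $\ins(\mathbf{a})$ and the identity $\mathbf{a}\sim\mathbf{a}^{(1)}\mathbf{a}_c$ are what should pin down the strip uniquely.
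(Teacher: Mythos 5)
Your outer induction on $i$ is a reasonable frame, but the two steps you yourself flag as obstacles are not loose ends to be tidied up later — they are the entire content of the lemma, and neither is closed. First, the secondary induction: to apply the lemma to $(\mathbf{a}^{(1)},k+1,m-1)$ you must supply its hypothesis, namely $\code\bigl(\perm\bigl(\varphi\bigl(\binom{\mathbf{b}_j}{k+2}\binom{\mathbf{a}^{(1)}}{k+1}\bigr)\bigr)\bigr)(k+2)=j$ for all $j\le i$, and you give no derivation of this from the stated condition on $\pi_j=\perm(\varphi(Q_jQ))$; these are different biwords (one carries the column $C_1$ and labels $\mathbf{b}$ by $k+1$, the other drops $C_1$ and relabels by $k+2$), and relating their codes is not a routine perturbation count. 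Worse, the inner application is not even licensed at the boundary: when $i=l=m-1$ the word $\mathbf{a}^{(1)}$ has $m-1$ columns and the lemma requires the strip length to be strictly less than that. Second, the Pieri step is genuinely insufficient: knowing that right-inserting the strictly decreasing word $\mathbf{a}_c$ adds a vertical strip of size $c_1$ to the shape with column heights $(c_2+1,\cdots,c_{i+1}+1,c_{i+2},\cdots,c_m)$ does not determine the answer — adding a vertical $2$-strip to a single column of height $2$ can produce column heights $(3,1)$ or $(2,2)$ — so the claimed conclusion does not follow without an argument locating the strip, which is again exactly the hard part. You also never establish $b_l\le\cdots\le b_1$, which any version of this argument appears to need.

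For contrast, the paper sidesteps both difficulties by arguing in the opposite direction. It first deduces $b_l\le\cdots\le b_1$ from the code hypothesis using Lemma~\ref{lem:spit-out-right}, then supposes for contradiction that the box created by $b_{j+1}$ lands in a column $p>j+1$; in that case the letter bumped out of column $j$ can be absorbed into the $k$-labelled part by Lemma~\ref{lem:right-absorb}, and Lemma~\ref{lem:spit-out-right} then leaves only $j$ biletters carrying the label $k+1$, forcing $\code(\pi_{j+1})(k+1)\le j$ — contradicting the hypothesis. In other words, the code condition is used to forbid the bad insertion outcome directly, rather than to propagate shape data through a peeled-off first column. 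If you want to salvage your route, the missing ingredients are precisely an analogue of that absorption argument (to pin down the vertical strip) and a lemma transporting the code hypothesis across the decomposition of Lemma~\ref{lem:spit-out-left}.
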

\begin{proof}
    Suppose for all $1\le i\le l$, $\code(\pi_i)(k+1)=i$, namely successively left inserting each biletter in $Q_l$ into $\varphi(Q)$ increases the code of the underlying permutation at $k+1$ by 1.
    We first argue that $b_l\le\cdots \le b_1$. Suppose not. Then $Q_l\sim RQ_l'$ for $R$ a $(k+1)$-biword with length less than $l$ and $Q_l'$ a $k$-biword by Lemma~\ref{lem:spit-out-right}. Then $Q_lQ\sim RQ_l'Q$ where $R$ is a $(k+1)$-biword and $Q_l'Q$ a $k$-biword. 
    It is then impossible that $\code(\pi_l)(k+1)=l$.
    
    That $\shape(\ins(\mathbf{b}_1\mathbf{a}))^t=(c_1+1,c_2, \cdots, c_m)$ follows from Lemma~\ref{lem:left-absorb}. Now suppose there exists $j$ where $\shape(\ins(\mathbf{b}_j\mathbf{a}))^t=(c_1+1,\cdots, c_j+1, c_{j+1},\cdots, c_m)$ but inserting $b_{j+1}$ to $\ins(\mathbf{b}_j\mathbf{a})$ adds an entry to a column $p>j+1$ (adding an entry to columns 1 to $j$ is impossible since $b_l\le\cdots \le b_1$). Without loss of generality assume $\mathbf{a}=\mathbf{c}_1\cdots \mathbf{c}_m$ is a column reading word where each $\mathbf{c}_i$ corresponds to a column. By assumption, $\mathbf{b}_j\mathbf{c}_1\cdots \mathbf{c}_j\sim \mathbf{c}_1'\cdots \mathbf{c}_j'$ where $\ell(\mathbf{c}_i')=c_i+1$ for each $1\le i\le j$, and $\mathbf{c}_1'\cdots \mathbf{c}_j'$ is a column reading word. Therefore, by Lemma~\ref{lem:right-absorb}, $\binom{\mathbf{b}_j}{k+1}\binom{\mathbf{a}}{k}\sim \binom{\mathbf{c}_1'\cdots \mathbf{c}_j'}{k+1}\binom{\mathbf{c}_{j+1}\cdots \mathbf{c}_m}{k}$. Now $\binom{b_{j+1}\mathbf{c}_1'\cdots \mathbf{c}_j'}{k+1}\sim \binom{\mathbf{c}_1''\cdots \mathbf{c}_j''b_{j+1}'}{k+1}$, where $\mathbf{c}_1''\cdots \mathbf{c}_j''$ is the column reading word of the first $j$ columns after left inserting $b_{j+1}$ into $\ins(\mathbf{c}_1'\cdots \mathbf{c}_j')$ and $b_{j+1}'$ is the entry bumped out of $\mathbf{c}_j'$. By assumption on $b_{j+1}$ we must have $b_{j+1}'\le x$ where $x$ is the first letter of $\mathbf{c}_{j+1}$. This means     $\binom{b_{j+1}\mathbf{c}_1'\cdots \mathbf{c}_j'}{k+1}\binom{\mathbf{c}_{j+1}\cdots \mathbf{c}_m}{k}\sim \binom{\mathbf{c}_1''\cdots \mathbf{c}_j''}{k+1}\binom{b_{j+1}'\mathbf{c}_{j+1}\cdots \mathbf{c}_m}{k}$. By Lemma~\ref{lem:spit-out-right}, $\binom{\mathbf{c}_1''\cdots \mathbf{c}_j''}{k+1}\sim R_jQ_j$ where $R_j$ is a length-$j$ $(k+1)$-biword and $Q_j$ is a $k$-biword. In summary, $\binom{\mathbf{b}_{j+1}}{k+1}\binom{\mathbf{a}}{k}\sim R_jQ_j\binom{b_{j+1}'\mathbf{c}_{j+1}\cdots \mathbf{c}_m}{k}$, contradicting that $\code(\pi_{j+1})(k+1)=j+1$. 
\end{proof}

\begin{lemma}
    \label{lem:add-bottom-strip-after-insertion}
    Let $T\in \SSYT(\lambda)$ where $\lambda^t=(c_1,\cdots, c_m)$, $\mathbf{b}_i=b_i\cdots b_1$ with $i<m$ and $b_i\le \cdots \le b_1$,  $T':=T\leftarrow a$. If left inserting $\mathbf{b}_i$ into $T$ adds an entry in each of the first $i$ columns of $T$, then left inserting $\mathbf{b}_i$ adds an entry in each of the first $i$ columns in $T'$.
\end{lemma}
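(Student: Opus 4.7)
The plan is to reduce the statement, via the commutativity of row (right) insertion and column (left) insertion on SSYTs, to the equality of two insertion columns.

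Let $S$ be the SSYT obtained by left-inserting $\mathbf{b}_i$ into $T$. Standard commutativity of row and column insertion (iterated letter by letter across $\mathbf{b}_i$) gives that left-inserting $\mathbf{b}_i$ into $T'=T\leftarrow a$ equals $S\leftarrow a$. The hypothesis says $\shape(S)^t=(c_1+1,\ldots,c_i+1,c_{i+1},\ldots,c_m)$; in particular the columns of $S$ of index $>i$ coincide with those of $T$, and in the first $i$ columns each entry of $S$ is componentwise at most the corresponding entry of $T$ (each column-insertion step can only replace an entry by a smaller one). Let $p$ be the column in which $T\leftarrow a$ adds a new box, and $q$ the column in which $S\leftarrow a$ adds one. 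A short shape computation using the identity above shows the desired conclusion --- that left-inserting $\mathbf{b}_i$ into $T'$ adds a box in each of the first $i$ columns of $T'$ --- is equivalent to the single equality $q=p$.

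The case $p>i$ is immediate from column-height bookkeeping: column $p$ of $T'$ has height $c_p+1$, column $p$ of $S$ has height $c_p$, and left-inserting $\mathbf{b}_i$ into $T'$ never decreases any column height, so the new box in $S\leftarrow a$ must lie in column $p$. For the case $p\le i$, I would analyze the bumping paths of the right-insertion of $a$ in $T$ and in $S$ row by row. They coincide on rows where the bumping column in $T$ exceeds $i$ (the relevant entries lie in columns $>i$, where $S$ and $T$ agree); once the bumping in $T$ first enters a column $\le i$, the monotonicity that bumping columns are weakly decreasing with row index pins both paths into columns $\le i$ thereafter. Combining the componentwise inequality $S\le T$ on cells of $\lambda$ with the fact that each new cell of $S$ in column $j\le i$ sits at the bottom of column $j$, one checks that the bumping in $S$ still terminates in column $p$.

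The main obstacle is the case $p\le i$: the shape comparison alone does not rule out the alternative $q>i$, and the bumping path of $a$ in $S$ can genuinely differ from the one in $T$ inside the first $i$ columns, because entries there may have been altered during the column-insertions of $\mathbf{b}_i$. The careful bumping-path bookkeeping, leveraging the specific position of the new cells of $S$ (each sitting at the bottom of some column in $\{1,\ldots,i\}$), is what forces the two terminal columns to agree and pins down $q=p$.
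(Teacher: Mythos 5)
Your strategy coincides with the paper's: both use the commutativity $\mathbf{b}_i\rightarrow(T\leftarrow a)=(\mathbf{b}_i\rightarrow T)\leftarrow a$ plus shape bookkeeping to reduce the claim to showing that the box created by row-inserting $a$ into $S:=\mathbf{b}_i\rightarrow T$ lies in the same column $p$ as the box created by row-inserting $a$ into $T$, and your treatment of the case $p>i$ is fine. The genuine gap is exactly where you flag it. In the case $p\le i$ you only sketch a bumping-path comparison, and that sketch cannot be completed: the step asserting that once the path in $T$ enters columns $\le i$ both paths are pinned there fails, because $S\le T$ componentwise, so at the cell where $T$'s path turns left the entry of $S$ may already be $\le$ the incoming value, and the path in $S$ then bumps strictly to the right of the path in $T$ and can escape past column $i$ for good. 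Concretely, take $T=\left(\begin{smallmatrix}1&5\\2&6\end{smallmatrix}\right)$, $i=1$, $\mathbf{b}_1=(3)$, $a=4$. Column-inserting $3$ into $T$ appends a box at $(3,1)$, so the hypothesis holds; row-inserting $4$ into $T$ also terminates at $(3,1)$, so $p=1\le i$. But $T'=T\leftarrow 4=\left(\begin{smallmatrix}1&4\\2&5\\6&\end{smallmatrix}\right)$, and column-inserting $3$ into $T'$ bumps the $6$ into column $2$: the new box is at $(3,2)$, not in column $1$ (equivalently, $S\leftarrow 4$ terminates at $(3,2)$, so $q=2>i$). The deflection occurs precisely when the box that $a$ adds to $T$ collides with the box that some $b_j$ adds and $b_j$ is small relative to the value arriving at that cell.

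This matters because the assertion you leave unproved is also the load-bearing sentence of the paper's own proof (``since right inserting $a$ adds an entry in a column $j\le i$, it must be the case that $T''$ differs from $T$ only in columns $\le i$''), and the example above violates the lemma as literally stated: there $T''$ acquires a new box in column $2>i$. So what is missing from your write-up is not a routine verification that careful bookkeeping would supply; one needs either an additional hypothesis excluding the collision scenario (presumably available from the way $\mathbf{b}$ and $a$ arise in Case 1 of the proof of Theorem~\ref{thm:main}) or a genuinely different argument. Your instinct that $p\le i$ is ``the main obstacle'' is correct, but the monotone path comparison you propose is precisely the claim that breaks.
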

\begin{proof}
    If right inserting $a$ into $T$ adds an entry in a column $j>i$, then left insertion of $\mathbf{b}_i$ and right insertion of $a$ into $T$ affect independent columns and the result is immediate. 
    Otherwise, the commutativity of Schensted's left and right insertion allows us to compute the left insertion of $\mathbf{b}_i$ into $T'$ by first left inserting $\mathbf{b}_i$ into $T$ and then right inserting $a$. Let $T'':=\mathbf{b}_i\rightarrow(T\leftarrow a)=(\mathbf{b}_i\rightarrow T)\leftarrow a$. By assumption, left insertion of $\mathbf{b}_i$ adds a horizontal border strip to $T$ in the first $i$ columns, so in this case, $T$ is strictly contained in $\mathbf{b}_i\rightarrow T$. Now since right inserting $a$ adds an entry in a column $j\le i$, it must be the case that $T''$ differs from $T$ only in columns $\le i$ and $T''$ has $i+1$ more entries than $T$. Since insertion of $\mathbf{b}_i$ into an SSYT must always add a horizontal border strip, we know that the shape of $T''$ differs from that of $T'$ by a horizontal border strip. It follows that left insertion $\mathbf{b}_i$ into $T'$ must add an horizontal border strip in the first $i$ columns of $T'$. 
\end{proof}
We are ready to prove the main theorem.
\begin{proof}[Proof of Theorem~\ref{thm:main}]
We proceed by induction on $\ell:=\ell(\pi)$ and show that for any $D\in\BPD(\pi)$, $\binom{a}{k}$ a biletter with $k\le d_1(\pi)$ (recall that $d_1(\pi)$ denotes the first descent of $\pi$ and $d_2(\pi)$ denotes the last descent of $\pi$), $Q^{\max}_D\binom{a}{k}\sim Q^{\max}_{D'}$ where $D'=\varphi(Q_D^{\max}\binom{a}{k})$. We write $Q^{\max}_D=\binom{a_1,\cdots, a_\ell}{k_1,\cdots, k_\ell}$ and suppose $\perm(D')=\pi t_{\alpha\beta}$. Notice that by the definition of maxwords, $k_\ell=d_1(\pi)$. Furthermore, we argue that it must be the case that $\beta\le k_1+2$. Since $k_1$ is the largest index $i$ with $\maxcode(i+1)\neq 0$ (by definition of maxwords), $\pi(k_1+1+i)=k_1+1+i$ for all $i\ge 1$. Since $\pi t_{\alpha\beta}\gtrdot \pi$, we know that if $\beta > k_1+2$, it must be the case that $\alpha=\beta-1$. This contradicts  $\alpha\le k <\beta$ since $k\le d_1(\pi)$. 

\textbf{Case 1 ($k=k_\ell,\ \beta=k_1+1$ or $\beta=k_1+2$).} 

By Lemma~\ref{lem:code-maxcode-properties}, since $\beta>d_2(\pi)$, $\#\{i>\beta:\pi(\alpha)<\pi(i)<\pi(\beta)\}=0$, and we know that $\code(\pi t_{\alpha\beta})$ agrees with $\code(\pi)$ everywhere except $\code(\pi t_{\alpha\beta})(\alpha)=\code(\pi)(\alpha)+1$. Since $\alpha\le k$,  $\code(h(\pi t_{\alpha\beta}))$ agrees with $\code(h(\pi))$ everywhere except $\code(h(\pi t_{\alpha\beta}))(\alpha)=\code(h(\pi))(\alpha)+1$. Let $\lambda:=h(\pi)$ and $\lambda':=h(\pi t_{\alpha\beta})$. Let $\lambda_{\Par}$ and $\lambda'_{\Par}$ denote their corresponding partitions and suppose $(\lambda_{\Par})^t=(c_1,\cdots, c_m)$ and $(\lambda_{\Par}')^t=(c_1',\cdots, c'_{m'})$. Note that $m=\code(\lambda)(k)=\code(\pi)(k)$. 

Let $\mathbf{ch}=(\id\lessdot_{u_{\ell}} \pi_1\lessdot_{u_{\ell-1}}\cdots \lessdot_{u_{\ell+1-i}}\pi_i\lessdot_{\ell-i}\cdots \lessdot_{u_1}\pi_\ell=\pi$) be a chain that satisfies the following conditions:
\begin{enumerate}[(a)]
    \item $\pi_{\ell(\lambda)}=\lambda$ and $u_\ell=\cdots =u_{\ell-\ell(\lambda)+1}=k$
    \item $u_{\ell-\ell(\lambda)+1}\le \cdots \le u_1$
    \item If $l:=\code(\pi)(k+1)>0$, then for all $1\le j\le l$, $\pi_{\ell(\pi)+j}=\delta^j(\lambda)$ and $u_{\ell(\lambda)+j}=k+1$. (Note that $l$ and $\ell$ are different notations in this proof.)
\end{enumerate}
It is easy to see that $\mathbf{ch}$ always exists. Let $Q:=\mathcal{L}^{-1}(D,\mathbf{ch})$. By construction of $\mathbf{ch}$,  $Q=Q_0\binom{\mathbf{b}}{k+1}\binom{\mathbf{a}}{k}$ where each biword $\binom{v}{u}$ in $Q_0$ has $u>k+1$, and $\mathbf{b}$ is empty if $l=0$. Write $\mathbf{b}=b_l\cdots b_1$ when $l>0$. 

Let $\mathbf{a}':=\mathbf{a}a$. By Lemma~\ref{lem:most-k-grass}, $\varphi(\binom{\mathbf{a}'}{k})\in\BPD(\lambda')$. 
Note that $\shape(S(\mathbf{a}'))=\lambda'_{\Par}$.
%Suppose $l>0$.
Let $\mathbf{b}_i:=b_i\cdots b_1$ for each $1\le i\le l$ and $Q_i:=\binom{\mathbf{b}_i}{k+1}$. By construction of $\mathbf{ch}$, $\code(\perm(\varphi(Q_i\binom{\mathbf{a}}{k})))(k+1)=i$. Since $k$ is a descent of $\pi$, $\code(\pi)(k)>\code(\pi)(k+1)$ and thus $l<m$. By Lemma~\ref{lem:add-bottom-strip}, $\shape(\ins(\mathbf{b}_i\mathbf{a}))^t=(c_1+1,\cdots c_i+1, c_{i+1},\cdots, c_m)$ for all $1\le i \le l$. 
By Lemma~\ref{lem:add-bottom-strip-after-insertion}, $\shape(\ins(\mathbf{b}_i\mathbf{a}'))^t=(c'_1+1,\cdots c'_i+1, c'_{i+1},\cdots, c'_{m'})$ for all $1\le i \le l$.
Let $\mathbf{c}'_1\cdots\mathbf{c}'_{m'}\sim\mathbf{a}'$ be the column reading word of $\ins(\mathbf{a}')$. Then by Lemma~\ref{lem:right-absorb}, $\binom{\mathbf{b}}{k+1}\binom{\mathbf{a}'}{k}\sim \binom{\mathbf{b}\mathbf{c}'_1\cdots\mathbf{c}'_{l}}{k+1}\binom{\mathbf{c}'_{l+1}\cdots\mathbf{c}'_{m'}}{k}$. Furthermore by Lemma~\ref{lem:left-absorb}, $\binom{\mathbf{b}\mathbf{c}'_1\cdots\mathbf{c}'_{l}}{k+1}\binom{\mathbf{c}'_{l+1}\cdots\mathbf{c}'_{m'}}{k}\sim Q_c\binom{e_1,\cdots, e_r}{k}$ where $Q_c$ is a $k+1$-biword, $e_1>\cdots>e_{r}$ where $r=c'_{l+1}$. We now argue that $c'_{l+1}=\maxcode(\pi t_{\alpha\beta})(k+1)$. Consider the Rothe BPD of $\pi t_{\alpha\beta}$. The first $k$ rows of the Rothe BPD of $\pi t_{\alpha\beta}$ is identical the first $k$ rows of the Rothe BPD of $\lambda'$. The heights of columns of blank tiles the first $k$ rows are $c_1',\cdots, c'_{m'}$ by the correspondence between $\lambda'$ and $\lambda'_{\Par}$. Since $\code(\pi t_{\alpha\beta})(k+1)=l$, the column of blank tiles in column $\pi t_{\alpha\beta}(k+1)$ of the first $k$ rows has height $c'_{l+1}$. This implies that a pipe exits from a row in range $[k-c'_{l+1}+1,k]$ if and only if it crosses the pipe $\pi t_{\alpha\beta}(k+1)$. It follows that $c'_{l+1}=\maxcode(\pi t_{\alpha\beta})(k+1)$.

So far $Q^{\max}_D\binom{a}{k}\sim Q\binom{a}{k}$ by induction, and $Q\binom{a}{k}=Q_0\binom{\mathbf{b}}{k+1}\binom{\mathbf{a}'}{k}\sim Q_0Q_c\binom{e_1,\cdots, e_r}{k}$. By Lemma~\ref{lem:shortest-k-unique}, we must have $Q_{D'}^{\max}= Q_{D''}^{\max}\binom{e_1,\cdots,e_r}{k}$ where $\varphi(Q_{D''}^{\max})=\varphi(Q_0Q_c)$. Then $Q_D^{\max}\binom{a}{k}\sim Q_{D'}^{\max}$ by induction. 

\textbf{Case 2 ($k=k_\ell,\ \beta<k_1+1$).}
Since $\beta<k_1+1$, $m:=\maxcode(\pi t_{\alpha\beta})(k_1+1)=\maxcode(\pi)(k_1+1)$. Therefore by Lemma~\ref{lem:kmax-unique}, $Q_{D'}^{\max}=\binom{\mathbf{a}}{k_1}Q_1$ and $Q_D^{\max}\binom{a}{k}=\binom{\mathbf{a}}{k_1}Q_2$ such that $\ell(\mathbf{a})=m$, each biletter $\binom{b_j}{k_j}$ in $Q_1$ and $Q_2$ satisfies $k_j<k_1$, and $\varphi(Q_1)=\varphi(Q_2)$. The rest follows since $Q_1\sim Q_2$ by induction.

\textbf{Case 3 ($k<k_\ell,\ \beta>k_\ell$).} In this case $\pi t_{\alpha\beta}$ has the same descent set as $\pi$. 
Let $Q$ be a biword for $D$ such that $Q=Q_0\binom{\mathbf{a}}{k_\ell}$ where $\ell(\mathbf{a})=\ell(h(\pi))$. By Lemma~\ref{lem:most-k-grass}, $\varphi(\binom{\mathbf{a}}{k_\ell})\in\BPD(h(\pi))$. Consider the right insertion of $Q\binom{a}{k}$. Since inserting a bileter after $Q_0$ may only increase the code of the permutation at positions $\le k_\ell$ and $\ell(h(\pi t_{\alpha\beta}))=\ell(\mathbf{a})+1$, it is only possible for each insertion of a biletter in $Q_0$ to increase the code of the permutation at positions $>k$. Therefore, $\sum_{i>k}\code(\pi t_{\alpha\beta})(i)=\ell(Q_0)$.
Now consider the left insertion of $Q\binom{a}{k}$. Since $\code(\perm(\varphi(\binom{\mathbf{a}}{k_\ell}\binom{a}{k})))(i)=0$ for all $i>k $ and $\code(\perm(\varphi(\binom{\mathbf{a}}{k_\ell}\binom{a}{k})))(i)=\code(\pi t_{\alpha\beta})(i)$ for all $i\le k$, the insertion of letters in $Q_0$ must only increase the code at positions $>k$ and does not affect positions $\le k$.  
It follows that $\perm(\varphi(\binom{\mathbf{a}}{k_\ell}\binom{a}{k}))$ must be $k_\ell$-Grassmannian. Therefore $\binom{\mathbf{a}}{k_\ell}\binom{a}{k}\sim \binom{\mathbf{a}}{k_\ell}\binom{a}{k_\ell}$ by Lemma~\ref{lem:grass-absorb} and the rest follows by the first two cases.

\textbf{Case 4 ($k<k_\ell,\ \beta\le k_\ell$).}
In this case $\pi t_{\alpha\beta}$ gains a new descent at $k$ and $Q_D^{\max}\binom{a}{k}=Q_{D'}^{\max}$.
\end{proof}

\begin{ex}
We show an example demonstrating Case 1 of the proof of Thereom~\ref{thm:main} using $D\in\BPD(\pi)$ from Example~\ref{ex:maxword} and $\binom{a}{k}=\binom{1}{4}$. 

\begin{align}
    Q_D^{\max}\left(\begin{smallmatrix}
a\\k\end{smallmatrix}\right)&=\left(\begin{smallmatrix}
        7&6&5&4&2&1&2&1&3&2 \\
        7&7&7&7&7&7&6&6&4&4
\end{smallmatrix}\right)\left(\begin{smallmatrix}
1\\4\end{smallmatrix}\right)\\
&\sim \left(\begin{smallmatrix}
        7&6&5&4&2&1&2&1&3&2 \\
        7&6&6&6&6&6&6&6&4&4
\end{smallmatrix}\right)\left(\begin{smallmatrix}
1\\4\end{smallmatrix}\right)\\
&\sim \left(\begin{smallmatrix}
        7&2&6&5&4&2&1&1&3&2 \\
        7&6&6&5&5&5&5&5&4&4
\end{smallmatrix}\right)\left(\begin{smallmatrix}
1\\4\end{smallmatrix}\right) \\
&\sim \left(\begin{smallmatrix}
        7&2&6&5&4&2&1&3&1&2 \\
        7&6&6&5&5&5&5&4&4&4
\end{smallmatrix}\right)\left(\begin{smallmatrix}
1\\4\end{smallmatrix}\right) \\
&\sim \left(\begin{smallmatrix}
        7&2&6&5&4&2&1&3&1&2 \\
        7&6&6&5&4&4&4&4&4&4
\end{smallmatrix}\right)\left(\begin{smallmatrix}
1\\4\end{smallmatrix}\right) \\
&\sim \left(\begin{smallmatrix}
        7&2&6&5&4&3&2&1&2&1&1 \\
        7&6&6&5&4&4&4&4&4&4&4
\end{smallmatrix}\right) \\
&\sim \left(\begin{smallmatrix}
        7&2&6&5&4&3&2&1&2&1&1 \\
        7&6&6&5&4&4&4&4&4&4&4
\end{smallmatrix}\right) \\
&\sim \left(\begin{smallmatrix}
        7&2&6&5&4&3&2&1&2&1&1 \\
        7&6&6&5&5&5&5&5&4&4&4
\end{smallmatrix}\right) \\
&\sim \left(\begin{smallmatrix}
        7&2&6&5&4&3&2&1&2&1&1 \\
        7&6&6&5&5&5&5&5&4&4&4
\end{smallmatrix}\right)\\
&\sim \left(\begin{smallmatrix}
        7&2&6&5&4&3&2&1&1&2&1 \\
        7&6&6&5&5&5&5&5&5&4&4
\end{smallmatrix}\right) \\
&\sim \left(\begin{smallmatrix}
        7&6&5&4&3&2&1&2&1&2&1 \\
        7&6&6&6&6&6&6&6&6&4&4
\end{smallmatrix}\right) \\
&\sim \left(\begin{smallmatrix}
        7&6&5&4&3&2&1&2&1&2&1 \\
        7&7&7&7&7&7&7&6&6&4&4
\end{smallmatrix}\right) = Q^{\max}_{D'}
\end{align}
The equivalence from (1) to (5) corresponds to an application of the inductive hypothesis in the proof. The biword in step (5) corresponds to the biword $Q$ with left recording chain $\mathbf{ch}$, the biword in step (10) corresponds to $Q_0Q_c\binom{e_1,\cdots,e_r}{k}$. Finally, the equivalence from (10) to (12) corresponds to another application of the inductive hypothesis. The resulting BPD of the insertion is shown below.
\begin{center}
    \includegraphics[scale=0.7]{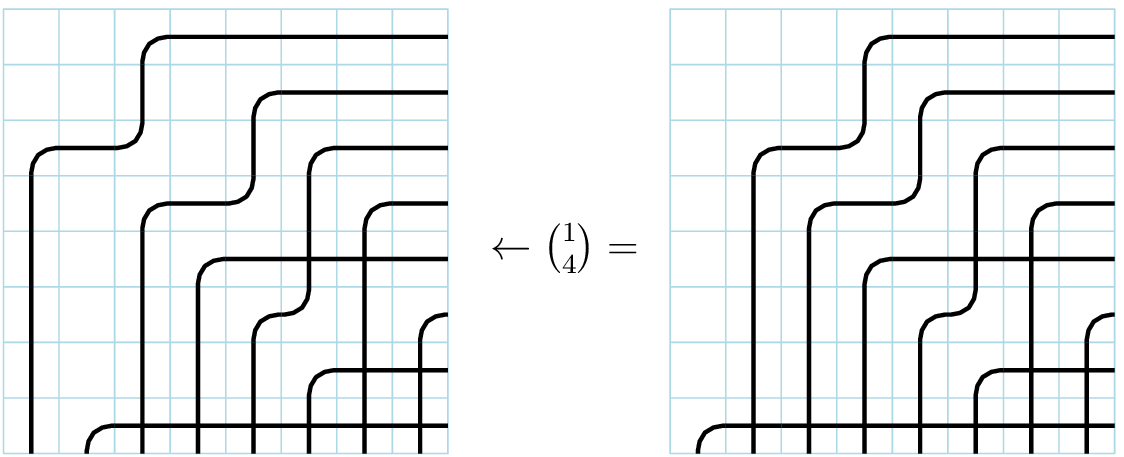}
\end{center}
\end{ex}

\bibliographystyle{alpha}
\bibliography{ref}
\end{document}